\theoremstyle{plain}
\newtheorem{theorem}{Theorem}[section]
\newtheorem{proposition}[theorem]{Proposition}
\newtheorem{cor}[theorem]{Corollary}
\newtheorem{prop}[theorem]{Proposition}
\newtheorem{lemma}[theorem]{Lemma}
\theoremstyle{definition}
\newtheorem{rmk}[theorem]{Remark}
\numberwithin{equation}{section}
\newtheorem*{theoremA*}{Theorem A}
\newtheorem*{theoremB*}{Theorem B}
\newtheorem*{theoremm1*}{Theorem A'}
\newtheorem*{theoremC*}{Theorem C}
\newtheorem*{theoremD*}{Theorem D}
\newtheorem*{theoremE*}{Theorem E}
\newtheorem*{theoremF*}{Theorem F}
\newtheorem*{theoremE2*}{Theorem E2}
\newtheorem*{theoremE3*}{Theorem E3}
\newcommand{\bs}{\backslash}
\newcommand{\C}{\mathbb{C}}
\newcommand{\Q}{\mathbb{Q}}
\newcommand{\Z}{\mathbb{Z}}
\newcommand{\Yc}{\mathcal{Y}}
\newcommand{\Zc}{\mathcal{Z}}
\newcommand{\R}{\mathbb{R}}
\newcommand{\N}{\mathbb{N}}
\newcommand{\End}{\operatorname{End}}
\newcommand{\Hom}{\operatorname{Hom}}
\newcommand{\Ad}{\operatorname{Ad}}
\newcommand{\re}{\operatorname{Re}}
\newcommand{\Tr}{\operatorname{Tr}}
\def\af{\mathfrak{a}}
\def\e{\epsilon}
\def\gf{\mathfrak{g}}
\def\hf{\mathfrak{h}}
\def\kf{\mathfrak{k}}
\def\lf{\mathfrak{l}}
\def\mf{\mathfrak{m}}
\def\nf{\mathfrak{n}}
\def\pf{\mathfrak{p}}
\def\qf{\mathfrak{q}}
\def\sf{\mathfrak{s}}
\def\uf{\mathfrak{u}}
\def\zf{\mathfrak{z}}
\def\la{\langle}
\def\ra{\rangle}
\def\1{{\bf1}}
\def\U{\mathcal{U}}
\title[Multiplicity bounds]
{Multiplicity bounds and the subrepresentation theorem for real spherical spaces}
\subjclass[2000]{22E45, 43A85}
\keywords{Homogeneous space, Harish-Chandra module, subrepresentation theorem}
\begin{document}
\date{March 15, 2014}

\begin{abstract}
Let $G$ be a real semi-simple Lie group
and $H$ a closed subgroup which 
admits an open orbit on the flag manifold
of a minimal parabolic subgroup. Let $V$ be a Harish-Chandra
module. A uniform finite bound is given for the 
dimension of the space of
$H$-fixed distribution vectors for $V$ and
a related subrepresentation theorem is derived.
\end{abstract}
\author[Kr\"otz]{Bernhard Kr\"otz}
\email{bkroetz@math.uni-paderborn.de}
\address{Universit\"at Paderborn\\Institut f\"ur Mathematik\\Warburger Stra\ss e 100\\
D-33098 Paderborn\\Germany}
\author[Schlichtkrull]{Henrik Schlichtkrull}
\email{schlicht@math.ku.dk}
\address{University of Copenhagen\\Department of Mathematics\\Universitetsparken 5\\
DK-2100 Copenhagen \O\\Denmark}

\maketitle

\section{Introduction}

Let $G$ be a connected real semi-simple Lie group and $P = MAN$
a minimal parabolic subgroup. Let $H <G$ be a closed 
and connected subgroup.  We call $Z=G/H$ {\it real spherical} provided 
that there is an open $P$-orbit on $Z$. 
In \cite{KS} we have shown that 
this condition implies that there are only finitely many $P$-orbits 
on $Z$. The purpose of this paper is to explore the representation theoretic 
significance of real sphericity. 

\par This paper relies in part on the forthcoming article \cite{KKS} on the local structure 
of real spherical spaces, which forces us to assume that the Lie algebra
of $H$ is an algebraic subalgebra of the Lie algebra of $G$. 

\par For a Harish-Chandra
module $V$ with smooth completion $V^\infty$, we show that 
if $G/H$ is real spherical with $PH$ open then
\begin{equation}\label{u-bound} \dim \mathrm{Hom}_H
(V^\infty, \C) \leq \dim (V/\bar\nf V)^{M\cap H}\, ,\end{equation}
with  
$\bar\nf ={\rm Lie}(\bar N)$ corresponding to an
opposite parabolic subgroup.

In this context we recall that $V/\bar\nf V$ is  finite dimensional,  a
consequence of the Casselman-Osborne lemma (see \cite{W}, Sect.~3.7).
For symmetric spaces  (which  
are real spherical) finite dimensionality of 
$\mathrm{Hom}_H (V^\infty, \C)$ was originally established by van den Ban 
in \cite{vdB2}, Cor.~2.2. 
Finally we remark that certain bounds on $\dim \mathrm{Hom}_H
(V^\infty, \C)$ were obtained with a different technique by Kobayashi and Oshima in \cite{KO}, Thm.~2.4. 

The bound in (\ref{u-bound}) is essentially sharp as equality 
is obtained for $H=\bar N$ and generic irreducible representations $V$.
However, a statement is presented in Thm.~\ref{thm1} which
in general can be stronger than (\ref{u-bound}).

The main part of the proof of (\ref{u-bound}) is elementary in the sense that it 
only invokes simple methods of ordinary differential equations, 
applied to generalized matrix coefficients 
on $Z$ (cf.~the proof of the subrepresentation theorem in \cite{W}, 
Sect.~3.8).  However these methods 
typically result in asymptotic expansions only. To prove that a matrix coefficient which 
is asymptotically zero (i.e.~of super-exponential decay), is in fact vanishing, we need more
elaborate analytic methods. 
This is done in the end of the paper where we adapt some results from
\cite{CM} and \cite{vdB} to show that the relevant system of differential equations
has a regular singularity at infinity.
 
\par Suppose that $PH$ is open. According to \cite{KKS} there 
exists a parabolic subgroup  $Q\supset P$ with a Levi decomposition $Q=LU$ 
such that $Q\cap H\subset L$ and $L/ (Q\cap H) Z(L)$ is compact.

In case that $V$ is irreducible and $H$-spherical, i.e.
$\mathrm{Hom}_H(V^\infty, \C)\neq \{0\}$, we prove 
the spherical subrepresentation theorem which asserts that 
$V$ is a submodule of an induced module $\mathrm{Ind}_{\bar Q}^G \tau$, 
where $\tau$ is an irreducible finite dimensional
representation of $L$ which is 
$L\cap H$-spherical. This was established for symmetric spaces by Delorme
in \cite{Patrick}. For the group, it is the subrepresentation theorem of
Casselman~\cite{cass}.

\section{Some structure theory for real spherical spaces}\label{str}

Let $G$ be a real reductive group and $H<G$ a closed subgroup 
with finitely many connected components. In what follows Lie algebras are 
always denoted by corresponding lower case German letters, i.e. $\gf$ is 
the Lie algebra of $G$, $\hf$ the Lie algebra of $H$ etc.

\par Let $P<G$ be a minimal parabolic subgroup of $G$. The homogeneous space 
$Z=G/H$ is called {\it real spherical}, provided that $P$ admits open orbits 
on $Z$. This means that by replacing $P$ with a conjugate we can obtain 
that $PH$ is open, that is 
$$\gf = \pf +\hf\, .$$

If $\lf$ is a reductive 
Lie algebra, then we denote by $\lf_n$ the sum of all simple non-compact ideals 
of $\lf$. We recall the following result from \cite{KKS}.

\begin{prop}\label{lst}
Let $H\subset G$ be algebraic groups over $\R$, and assume
$Z=G/H$ is real spherical. Let $P$ be a minimal parabolic subgroup such that
$PH$ is open.
Then there exists a parabolic subgroup $Q$ such that 
\begin{enumerate}
\item\label{eins} $Q\supset P$.
\item\label{zwei} There is a Levi-decomposition $Q=LU$ such that 
$$\lf_n\subset  \qf \cap \hf \subset \lf\, ,$$
and in particular, $L/(L\cap H)Z(L)$ is compact 
(here $Z(L)$ denotes the center of $L$).
\item\label{vier} $L\cap H$ has finitely many components.
\item\label{drei} $QH=PH$
\end{enumerate}
\end{prop}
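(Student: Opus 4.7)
The plan is to build $Q$ as a standard parabolic subgroup containing $P$, determined by a subset $I$ of the simple restricted roots of $(\gf,\af)$. Fix an Iwasawa decomposition $\gf = \kf \oplus \af \oplus \nf$ compatible with $P = MAN$; let $\Sigma = \Sigma(\gf,\af)$ be the restricted root system with positive system $\Sigma^+$ attached to $\nf$ and simple roots $\Pi$. For $S \subset \Pi$, write $Q_S = L_S U_S$ for the standard parabolic attached to $S$, with
\[
\lf_S = \af + \mf + \sum_{\alpha \in \la S\ra \cap \Sigma} \gf_\alpha, \qquad \uf_S = \sum_{\alpha \in \Sigma^+ \setminus \la S\ra} \gf_\alpha.
\]

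The key step is to isolate the correct $I$. I would consider the collection of subsets $S \subset \Pi$ satisfying the ``vertical'' condition $\qf_S \cap \hf \subset \lf_S$. This collection is nonempty (take $S = \Pi$, for which $Q_\Pi = G$ and the condition is trivial), so a minimal element $I$ exists. The open-orbit hypothesis $\gf = \pf + \hf$, combined with the minimality of $I$, is then used to force the complementary inclusion $(\lf_I)_n \subset \hf$: any simple non-compact ideal $\jf \subset \lf_I$ not contained in $\hf$ would permit the removal from $I$ of the simple roots supporting $\jf$ without destroying the vertical condition, contradicting minimality.

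Setting $Q = Q_I = LU$, property (1) holds by construction. For (2): the containment $\qf\cap\hf\subset\lf$ is the defining condition on $I$, while $\lf_n\subset\qf\cap\hf$ was established above; since $\lf/(\lf_n+\zf(\lf))$ has only compact simple ideals, $L/(L\cap H)Z(L)$ is compact. Property (3) is immediate from the algebraicity of $H$, as $L\cap H$ is then a real-algebraic subgroup of $L$. For (4), the equality $QH=PH$ reduces to $L\subset PH$ (using $Q = (P\cap L)U$): choosing a maximal compact $K_L\subset L$ compatibly with the Iwasawa data so that $K_L Z(L)\subset P\cap L$, and writing $L = K_L Z(L)(L\cap H)$ as permitted by the compactness in (2), one obtains $L\subset P(L\cap H)\subset PH$.

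The main obstacle is making both inclusions in (2) simultaneously sharp. The formal minimality argument above only sketches why $(\lf_I)_n \subset \hf$; turning it into a rigorous proof requires the combinatorial apparatus of real sphericity, most naturally the \emph{little Weyl group} of $Z$ together with its compression cone in $\af$. These objects record precisely which simple ideals of a Levi are forced into $\hf$ by the open-orbit condition, and a self-contained proof cannot avoid constructing them. This analysis is the technical heart of the local structure theorem in \cite{KKS}.
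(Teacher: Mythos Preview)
The paper does not prove this proposition: it is stated as a result \emph{recalled from} \cite{KKS}, with no argument given in the present paper beyond the citation. So there is no in-paper proof to compare your proposal against; your sketch is in effect an outline of (part of) the argument of \cite{KKS}, as you yourself note at the end.

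A few comments on the sketch nonetheless. Your minimality argument for the inclusion $(\lf_I)_n\subset\hf$ is, as you say, only heuristic: removing from $I$ the simple roots supporting a non-compact ideal $\jf\not\subset\hf$ enlarges $\uf$, and one must actually check that the condition $\qf_{I'}\cap\hf\subset\lf_{I'}$ survives; this is exactly where the local structure machinery is needed. More concretely, your derivation of (4) contains an error: you assert that one can choose a maximal compact $K_L\subset L$ with $K_L Z(L)\subset P\cap L$, but $P\cap L$ is a \emph{minimal parabolic} of $L$ and cannot contain a maximal compact subgroup unless $L$ itself has no non-compact semisimple part. The clean way to get $QH=PH$ is the one indicated implicitly in the paper just after the proposition: from $\lf_n\subset\hf$ and $\lf=\mf+\af+\lf_n$ one has $L=MAL_n$ (using that $M$ meets every component of $L$), hence $Q=LU=MAL_nU\subset P L_n\subset PH$, so $QH\subset PH$; the reverse inclusion is trivial since $P\subset Q$.
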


Through most of the paper we only need the properties (\ref{eins})-(\ref{zwei}), and for that 
it clearly suffices to assume that $Z$ is real spherical and 
{\it locally} algebraic, 
that is, there are real algebraic groups $H_1\subset G_1$ with 
Lie algebras $\hf$ and $\gf$. 
We assume throughout that $Z$ is locally algebraic, but emphasize that
this assumption is only used to obtain (\ref{eins})-(\ref{zwei}) above. 
 In Remarks \ref{Lcap H-rep} and \ref{second remark}  
we need also (\ref{vier}), which is a consequence if
$G$ and $H$ are algebraic since then $L$ is algebraic.
The last property (\ref{drei})
is important for \cite{KKS} but will not be
needed here.

Note that in case $H$ is symmetric, then the minimal $\sigma\theta$-stable 
parabolic subgroups (\cite{vdB3} Sect.~2) 
satisfy   (\ref{eins})-(\ref{drei}).
Here $\theta$ is a Cartan involution which commutes 
with the involution $\sigma$ which defines $\hf$. 

\par If $G/H$ is real spherical and $P\subset Q=LU$
is as above, we let $\theta$ be a Cartan involution of $G$
which leaves $L$ stable. The existence of $\theta$ follows 
since $L$ is a reductive subgroup of $G$.
Let $\gf=\kf+\sf$ be the Cartan decomposition and
$K\subset G$ the corresponding maximal compact subgroup,
then $K_L=L\cap K$ is maximal compact in $L$.
Let $\af$ be a maximal abelian subspace in $\lf\cap\sf$.
We may assume $A$ is contained in $L\cap P$,
since this intersection is a minimal parabolic subgroup in $L$.
Let $L=K_L A N_L$  be an Iwasawa decomposition of $L$ and 
put $N=N_LU$. Note that $\af$ is maximal abelian in $\sf$ as well. Let $M$ be the centralizer 
of $A$ in $K$. Then $P=MAN$, and it follows from (\ref{zwei}) above that 
$\af=\af\cap\zf(\lf)+\af\cap\hf.$
Let $\af_Z \subset \af\cap \zf(\lf)$ be a vector space complement to
$\af\cap\hf$,
$$\af=\af_Z\oplus(\af\cap\hf),$$
and $\mf_Z\subset \mf$ a subspace such that
$\af_Z+\mf_Z$ complements $(\af+\mf)\cap\hf$ in $\af+\mf$, 
then we arrive at the direct sum 
decomposition 
\begin{equation}\label{ldeco} \gf = \hf \oplus \af_Z \oplus \mf_Z \oplus\uf\, .\end{equation}

Let $L_n$ be the analytic subgroup of $L$ with Lie algebra $\lf_n$.
Since $\lf=\mf+\af+\lf_n$ and $\lf_n\subset \hf$, and since $M$ meets every component of $L$ 
(see \cite{KnappBeyond}, Prop. 7.33),
we conclude
\begin{equation*}
L=MAL_n
\end{equation*}
and $L_n\subset H$.
For any Lie group $J$ we denote by $J_0$ 
its identity component. 

\begin{lemma}\label{LcapH} There exists
a vector subgroup $A_h\subset MA$ such that 
\begin{equation}\label{identity component LcapH}
(L\cap H)_0=(M\cap H)_0A_hL_n.
\end{equation}
Moreover,
if $L\cap H$ has finitely many connected components, then 
\begin{equation}\label{all of LcapH}
L\cap H=(M\cap H)A_hL_n,
\end{equation}
In this case $L\cap H$ is a real reductive group and 
$L\cap H\cap K$ is a maximal compact subgroup.
\end{lemma}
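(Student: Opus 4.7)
The plan is to reduce to studying $(MA)\cap H$ and then apply a Malcev-type decomposition. Since $L=MAL_n$ and $L_n\subset H$, writing any $x\in L\cap H$ as $x=ma\ell$ with $m\in M$, $a\in A$, $\ell\in L_n$ gives $ma=x\ell^{-1}\in(MA)\cap H$, so $L\cap H=((MA)\cap H)L_n$, and the lemma reduces to the analysis of $(MA)\cap H$. Because $M$ is compact and $A$ is a vector group, $M\cap A=\{e\}$ and $MA\cong M\times A$ as Lie groups. Let $J:=((MA)\cap H)_0$; the projection $\pi\colon MA\to A$ restricted to $J$ has image a connected Lie subgroup of the vector group $A$, hence a closed vector subgroup $A'\subset A$, and kernel a connected normal Lie subgroup with Lie algebra $\mf\cap\hf$, namely $(M\cap H)_0$ (connectedness of the kernel by the homotopy long exact sequence, since $A'$ is simply connected). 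Thus we have
\begin{equation*}
1\to(M\cap H)_0\to J\to A'\to 1.
\end{equation*}
Any compact subgroup of $J$ projects trivially to the vector group $A'$, so $(M\cap H)_0$ is maximal compact in $J$; Malcev's theorem then provides a closed simply connected subgroup $A_h\subset J$ with $J=(M\cap H)_0A_h$ and $(M\cap H)_0\cap A_h=\{e\}$. The induced bijective Lie group homomorphism $A_h\to A'$ onto the abelian $A'$ forces the Lie algebra of $A_h$ to be abelian, so $A_h$ is a vector subgroup of $MA$. Combining with the reduction, $(L\cap H)_0=(M\cap H)_0A_hL_n$.

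For the finite-components case, suppose $L\cap H$ has finitely many components. Then so does $(MA)\cap H$: indeed, $L_n$ is connected and $(MA\cap H)\cap L_n=MA\cap L_n=M_nA_n$ (the Langlands Levi of a minimal parabolic of $L_n$) has finitely many components. The image $\pi((MA)\cap H)\subset A$ is closed (as $\pi|_{(MA)\cap H}$ is proper, its fibers being compact), has identity component $A'$, and finite component group; but the vector group $A/A'$ is torsion-free, so the image equals $A'$. The sequence $1\to M\cap H\to(MA)\cap H\to A'\to 1$ is split by $A_h$, yielding $(MA)\cap H=(M\cap H)A_h$ and hence $L\cap H=(M\cap H)A_hL_n$.

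Finally, $L\cap H$ is an algebraic subgroup of $G$ (as the intersection of algebraic subgroups). Since $MA$ consists of semisimple elements and the compact factor $L_c$ contains no nontrivial unipotents, every unipotent element of $L$ lies in $L_n$; any connected normal unipotent subgroup of $L\cap H$ is therefore normal in the semisimple $L_n$, and hence trivial. Thus $L\cap H$ has trivial unipotent radical and is reductive. By Mostow's theorem, a Cartan involution $\theta$ of $G$ can be chosen to simultaneously stabilize $L$ and the reductive subgroup $L\cap H$; with this choice, the fixed-point set $(L\cap H)^\theta=(L\cap H)\cap K$ is a maximal compact subgroup. The main technical step is the construction of $A_h$: the key obstacle is to realize the Malcev complement as a vector (abelian) subgroup of $MA$, which is forced here by the bijection $A_h\to A'$ onto the abelian vector group $A'$.
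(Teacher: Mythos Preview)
Your reduction to $(MA)\cap H$ via $L\cap H=((MA)\cap H)L_n$ is the same as the paper's, and your treatment of the finite-component case (projecting to $A$, using properness and that closed subgroups of $\R^n$ with finitely many components are vector spaces) is actually more explicit than the paper's terse ``the argument is the same.'' Two points, however, need attention.

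\medskip
\emph{The Malcev step.} Invoking ``Malcev's theorem'' to produce a closed simply connected \emph{subgroup} $A_h$ with $J=(M\cap H)_0A_h$ is not justified as stated: the Malcev--Iwasawa theorem yields $J\cong K\times\R^n$ only as manifolds, not a group-theoretic complement. What makes the splitting work here is the special structure of $\mf+\af$, and this is exactly the paper's (shorter) argument: the derived algebra of $\mf+\af$ lies in the compact $\mf$ and the rest is central, so $\mf+\af$ is a compact Lie algebra, and hence so is its subalgebra $(\mf+\af)\cap\hf$. A connected Lie group with compact Lie algebra is automatically a \emph{direct product} $S\times A_h$ of a compact group and a vector group. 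Then $S$, being compact, projects trivially to $A$, hence $S\subset M\cap H$; conversely $(M\cap H)_0\subset J$ is compact, so $S=(M\cap H)_0$. This gives the decomposition without appeal to Malcev, and the same reasoning handles the disconnected case.

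\medskip
\emph{The final assertion.} You deduce reductivity by assuming $L\cap H$ is algebraic and arguing via its unipotent radical, then invoke Mostow to align the Cartan involution. But the lemma's hypothesis is only that $L\cap H$ has finitely many components; algebraicity is listed in the paper as a separate sufficient condition for that. The paper instead reads both statements directly off the decomposition $L\cap H=(M\cap H)A_hL_n$: the Lie algebra splits as the semisimple ideal $\lf_n$ plus a compact complement, hence is reductive, and the product form makes $(M\cap H)(K\cap L_n)=L\cap H\cap K$ a maximal compact subgroup. Incidentally, your claim ``every unipotent element of $L$ lies in $L_n$'' does not follow merely from $MA$ and $L_c$ consisting of semisimple elements, since $L=MAL_n$ is not a direct product; the correct argument passes to the quotient $L/L_n$, whose Lie algebra is compact.
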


\begin{proof} We have $L\cap H=(MA\cap H)L_n$. 
Since the Lie algebra $\mf+\af$ is compact, then so is its intersection with $\hf$.
It follows that $(MA\cap H)_0$ is the direct product of a compact group $S$
and a vector group $A_h$. The compact group projects trivially to $A$ along $M$, 
hence $S\subset M$  
(whereas it need not be the case that $A_h\subset A$). This implies (\ref{identity component LcapH}).
The argument for (\ref{all of LcapH}) is the same. The last statement follows
from (\ref{all of LcapH}).
\end{proof}

\section{Finite multiplicity}

We assume throughout this section that $G/H$ is spherical
with $PH$ open, and that a Cartan involution $\theta$
of $G$ has been chosen as described in the preceding section.
Accordingly we write $P=MAN$. Let $\Sigma^+(\gf, \af)$ be the set
of positive roots of $\af$ in $\gf$, and let $\bar P=\theta(P)=MA\bar N$ be 
the opposite parabolic subgroup.

\par For a Harish-Chandra module $V$ we denote by $V^\infty$ its unique smooth 
moderate growth Fr\'echet completion. 
Note that  $V^\infty$ is a $G$-Fr\'echet module and we set 
$V^{-\infty}:= (V^\infty)'$ for its strong dual. Our goal is to provide a bound for the dimension of  
the space of $H$-invariants 
$$(V^{-\infty})^H:=  \mathrm{Hom}_H  (V^\infty, \C)$$
where $\mathrm{Hom}$ stands for continuous linear homomorphisms. 

\par For $v\in V^\infty$ and $\eta$ a continuous $H$-invariant functional on $V^\infty$ we form the 
matrix coefficient 
$$m_{v,\eta}(g):= \eta (\pi(g^{-1})v) \qquad (g\in G)\, .$$ 
Note that $m_{v,\eta}$ is a smooth function on $G$, even analytic  for $v\in V$.
We start with a general lemma which we prove later. We shall say that
$f:\R\to\C$ is of {\it super exponential decay} for $t\to\infty$ if
$f(t)=O(e^{\lambda t})$ for all $\lambda\in\R$.

\begin{lemma}\label{lem1} Let $V$ be a Harish-Chandra module.
Let $X\in \af$ be any element which is strictly anti-dominant with respect to $P$ and 
for which $\alpha(X)\in\Q$ for each $\alpha\in\Sigma(\gf,\af)$.
Fix $\eta\in (V^{-\infty})^H$. Suppose that 
for all $v\in V$ the function 
$$\R \ni t\mapsto m_{v,\eta} (\exp(tX)) \in \C$$
is of super exponential decay for $t\to \infty$. Then $\eta=0$.
\end{lemma}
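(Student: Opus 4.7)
The strategy is to show that for each $v\in V$ the function
$\phi_v(t) := m_{v,\eta}(\exp(tX))$ vanishes identically on $\R$; evaluating at
$t=0$ then yields $\eta(v) = 0$ for every $v\in V$, and density of $V$ in
$V^\infty$ together with continuity of $\eta$ gives $\eta = 0$.

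The first step is to set up the relevant differential system on
$\F := \{\phi_w : w\in V\}$. Since $X$ commutes with $\exp(-tX)$, a direct
computation gives $\phi_v'(t) = -\phi_{Xv}(t)$, so $\F$ is stable under $d/dt$.
The essential finiteness now comes from the Casselman--Osborne theorem recalled
in the paper: $V/\bar\nf V$ is finite-dimensional, and the induced action of $X$
on this quotient has only finitely many generalized eigenvalues. Combined with
the fact that, for $X$ strictly anti-dominant, $\mathrm{Ad}(\exp(-tX))$ contracts
$\bar\nf$ like $e^{\alpha(X) t}$, this leads one to expect a formal asymptotic
expansion of the shape
\begin{equation*}
\phi_v(t) \sim \sum_{\lambda,k} e^{\lambda t}\, t^k\, c_{\lambda,k}(v), \qquad t\to\infty,
\end{equation*}
where $\lambda$ runs over a finite set of exponents determined by the
generalized eigenvalues of $X$ on $V/\bar\nf V$ (shifted by $-\rho(X)$). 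The
rationality $\alpha(X)\in\Q$ ensures that, after the substitution
$s = e^{\alpha(X)t/N}$ for a common denominator $N$, these exponents lie on an
integer lattice and the expansion becomes a Puiseux-type series in $s$ with
logarithmic corrections.

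The second step, which is the main obstacle, is to promote this formal picture
to an actual asymptotic expansion whose vanishing forces $\phi_v \equiv 0$. This
amounts to showing that the finite-dimensional differential system induced on the
appropriate quotient has a regular singularity at $t=\infty$ (equivalently,
Fuchsian at $s=0$). I would adapt the machinery of Casselman--Mili\v{c}i\'c
\cite{CM} and van den Ban \cite{vdB} to the real spherical setting, using the
local structure from Proposition \ref{lst}: the decomposition
$\gf = \hf \oplus \af_Z \oplus \mf_Z \oplus \uf$ provides a good transversal to
the $A$-orbit inside the open orbit $PH/H$, along which the system can be put
in Fuchsian form. Once this is in place, the classical Frobenius-theory
dichotomy applies: a solution whose formal expansion vanishes identically is
itself the zero solution. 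Super-exponential decay of $\phi_v$ immediately kills
all coefficients $c_{\lambda,k}(v)$, so $\phi_v\equiv 0$ on $\R$, completing
the argument. The genuinely new work is precisely this second step, since the
symmetric-space tools of \cite{vdB} are no longer directly available; the
rationality of $\alpha(X)$ and the real-spherical local structure of
Proposition \ref{lst} are the two key inputs that should make the Fuchsian
adaptation go through.
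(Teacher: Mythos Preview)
Your plan is correct and follows essentially the same route as the paper: Section~\ref{appendix} establishes the expansion (\ref{power}) by constructing a first-order system with a regular singularity at $z=e^{-t}=0$, using the decomposition $\gf=\Ad(a^{-1})\kf+\af_Z+\hf$ derived from the local structure (Lemmas~\ref{app1}--\ref{app4}) to adapt \cite{CM} and \cite{vdB}, after which super-exponential decay forces $F_v\equiv 0$ by \cite{W}, Lemma~4.A.1.2. The one notable difference is that the paper obtains the finite-dimensional system via the Harish-Chandra homomorphism and the scalar action of $\Zc(\gf)$ (Lemma~\ref{app4}) rather than directly through $V/\bar\nf V$ as you outline.
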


\begin{proof} Let $v\in V$ and set $F_v(t)= m_{v,\eta} (\exp(tX))$.
We may assume that $\alpha(X)$ is an integer for each root $\alpha$.
With the new variable $z=e^{-t}$ we will show in Section \ref{appendix} 
that then $F_v$ admits an expansion 
\begin{equation}\label{power}  
F_v(t)=\sum_{j=1}^N \sum_{k=0}^M  z^{\lambda_j} (\log z)^k f_{j,k}(z),\quad(t\gg 0),
\end{equation}
where $\lambda_j\in \C$ and the $f_{j,k}$ are holomorphic functions in 
a neighborhood of $z=0$ in $\C$. The fact that $F_v$ is of superexponential 
decay then forces $F_v=0$ (this follows for example
from \cite{W}, Lemma 4.A.1.2).  Then 
$F_v(0)=\eta(v)=0$, and hence $\eta=0$ since $v\in V$ was arbitrary. 
\end{proof}

\par For a Harish-Chandra module $V$ we recall that $V/\bar\nf V$ is a finite dimensional module 
for $\bar P=MA\bar N$ with $\bar N$ acting trivially. 
Recall from Proposition \ref{lst} the
parabolic subgroup $Q=LU$ and its
subalgebra $\qf =\lf+\uf \supset \pf$. Let $\bar\qf=\theta(\qf)=\lf+\bar\uf$ and
define a subalgebra $\bar\qf_1$ of $\bar\qf$ by
\begin{equation} \label{eq1} 
\bar\qf_1:=(\lf\cap\hf) +{\bar\uf} \subset \hf + {\bar\nf}. \end{equation}
Note that $\bar\nf\subset \lf_n+\bar\uf \subset\qf_1$. Hence the
quotient $V/\bar\qf_1 V$ is finite dimensional. Moreover,
it carries a natural action of $L\cap K\cap H$ since $\bar\qf_1$ is $L\cap H$-invariant.

\begin{theorem}\label{thm1}  Let $V$ be a Harish-Chandra module and 
$V^\infty$ its unique smooth Fr\'echet completion. Then 
$$ \dim \mathrm{Hom}_H  (V^\infty, \C) \leq \dim (V/\bar\qf_1 V)^{L\cap K\cap H}\, .$$  
In particular {\rm (\ref{u-bound})} from the introduction is valid.
\end{theorem}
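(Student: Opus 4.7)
The plan is to exhibit an injective linear map
$$\Phi\colon (V^{-\infty})^H \hookrightarrow \bigl((V/\bar\qf_1 V)^*\bigr)^{L\cap K\cap H},$$
from which the bound follows because the compact group $L\cap K\cap H$ acting on the finite-dimensional space $V/\bar\qf_1V$ satisfies $\dim((V/\bar\qf_1V)^*)^{L\cap K\cap H}=\dim(V/\bar\qf_1V)^{L\cap K\cap H}$. The construction of $\Phi$ uses boundary coefficients of matrix coefficients $m_{v,\eta}$ along a strategically chosen ray in $A$, and injectivity will be reduced to Lemma~\ref{lem1}.

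Decompose $\af=\af_Z\oplus(\af\cap\hf)$ as in Section~\ref{str} and pick $X\in\af$ strictly anti-dominant for $P$ with $\alpha(X)\in\Q$ for every root $\alpha$, chosen so that the $\af_Z$-component $X_Z$ is strictly anti-dominant for $\Sigma(\uf)$. Since $\exp(-tX)$ differs from $\exp(-tX_Z)$ by an element of $A\cap H$ and $\eta$ is $H$-invariant, the matrix coefficient $F_v(t):=m_{v,\eta}(\exp(tX))$ coincides with $\eta(\pi(\exp(-tX_Z))v)$, so the expansion is effectively in the direction $X_Z\in\zf(\lf)$ which centralizes $\lf$. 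The analytic input of Section~\ref{appendix} then yields
\begin{equation*}
F_v(t)=\sum_{j=1}^N\sum_{k=0}^M z^{\lambda_j}(\log z)^k f_{v,j,k}(z), \qquad z=e^{-t},\ t\gg 0,
\end{equation*}
with $f_{v,j,k}$ holomorphic near $0$ and a finite exponent set $\{\lambda_j\}$ independent of $v$, determined by the generalized $X_Z$-eigenvalues on $V/\bar\nf V$. Setting $T_{j,k}(v):=f_{v,j,k}(0)$ assembles into a linear functional $\Phi(\eta)$ on $V$.

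It remains to verify that (i) $\Phi(\eta)$ factors through $V/\bar\qf_1V$ and (ii) the factorization is $L\cap K\cap H$-invariant. For (i), vanishing on $(\lf\cap\hf)V$ is immediate: $X_Z$ centralizes $\lf\cap\hf$, so for $Y\in\lf\cap\hf\subset\hf$ one computes $F_{Yv}(t)=\eta(Y\pi(\exp(-tX_Z))v)$, which vanishes identically by $H$-invariance; consequently every $T_{j,k}(Yv)=0$. Vanishing on $\bar\uf V$ follows from $\mathrm{Ad}(\exp(-tX_Z))Y=e^{-t\beta(X_Z)}Y$ with $\beta(X_Z)>0$ for a root vector $Y\in\bar\uf$ of root $\beta\in-\Sigma(\uf)$; this exponential contraction strictly shifts the leading exponents of $F_{Yv}$ upward relative to $F_v$, so the constant-term maps $T_{j,k}$ vanish on $\bar\uf V$. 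For (ii), $L\cap K\cap H$ sits inside $L$, commutes with $\exp(tX_Z)\in Z(L)$, and preserves $\bar\qf_1$ under $\mathrm{Ad}$, so the functionals $T_{j,k}$ inherit the required equivariance.

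For injectivity, suppose $\Phi(\eta)=0$, so every $T_{j,k}(v)$ vanishes. A peeling argument across the finite exponent set $\{\lambda_j\}$ and logarithmic orders forces $F_v$ to be of super-exponential decay for each $v\in V$; Lemma~\ref{lem1} then yields $\eta=0$. The main obstacle, I expect, is making the peeling rigorous: one must justify that vanishing the boundary constants $T_{j,k}$ at every exponent eliminates the entire asymptotic expansion rather than just its leading piece, which requires careful use of the full structure of the expansion provided by the ODE analysis in Section~\ref{appendix} (in particular, that each $f_{v,j,k}$ is determined by its value at $0$ together with data at lower orders via the indicial structure).
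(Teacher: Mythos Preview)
Your strategy has a structural gap at the very first step: the map $\Phi$ as you describe it cannot land in a \emph{single} copy of $((V/\bar\qf_1V)^*)^{L\cap K\cap H}$. You define one functional $T_{j,k}$ for each pair $(j,k)$ and then say these ``assemble into a linear functional $\Phi(\eta)$'', but there are $N(M{+}1)$ such functionals. Packaging them together sends $\eta$ into a direct sum of that many copies of $((V/\bar\qf_1V)^*)^{L\cap K\cap H}$, and injectivity of that map yields only $\dim(V^{-\infty})^H\le N(M{+}1)\cdot n_0$, far too weak. If instead you keep only the leading $T_{j,k}$, the resulting map is not injective (and not even obviously linear, since which exponent is ``leading'' may depend on $\eta$): vanishing of a leading coefficient merely improves the decay rate, it does not force $\eta=0$. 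Your own worry about ``peeling'' is thus not a technical detail to be filled in later---it is where the whole dimension count lives.

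The repair is precisely that peeling, but it must be organized as a \emph{filtration} rather than a single injection, and this is what the paper does. One sets $\Upsilon_\Lambda=\{\eta: F_{v,\eta}(t)=O(e^{t\Lambda'})\ \forall\,\Lambda'>\Lambda,\ \forall\,v\in V\}$, uses Lemma~\ref{lem1} for $\Upsilon_{-\infty}=0$, and then shows by a direct first-order ODE that the filtration jumps only at the values $\mu_k=-\re\lambda$ for $\lambda$ an eigenvalue of $X$ on $(V/\bar\qf_1V)^{L\cap K\cap H}$, with the $k$-th jump of codimension at most the multiplicity $m_k$. Summing gives $\dim(V^{-\infty})^H\le\sum_k m_k=n_0$. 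The ODE is obtained by lifting a basis $\bar w_j$ of $(V/\bar\qf_1V)^{L\cap K\cap H}$ to $w_j\in V$ and writing $Xw_j=\sum_k b_{jk}w_k+u_j$ with $u_j\in\bar\qf_1V$; the engine of the argument is exactly your observation in step~(i), that $F_{u,\eta}$ has strictly better decay than generic $F_{v,\eta}$ when $u\in\bar\qf_1V$. This route also avoids invoking the full expansion of Section~\ref{appendix} (which enters only through Lemma~\ref{lem1}) and hence sidesteps the issue that the exponent set there is established one $K$-type at a time, whereas your uniform ``finite exponent set independent of $v$'' is asserted without justification across all of $V$.
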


\begin{rmk}\label{Lcap H-rep}
If $L\cap H$ has finitely many components then
it follows from Lemma \ref{LcapH} that the trivial action of
$\lf\cap\hf$ on $V/\bar\qf_1 V$ lifts to an action of $L\cap H$ which
agrees with the natural action of $L\cap H\cap K$.
In this case then $(V/\bar\qf_1 V)^{L\cap K\cap H}=(V/\bar\qf_1 V)^{L\cap H}$ for this action.
\end{rmk}

\begin{proof}  
\par 
Set $\Upsilon=(V^{-\infty})^H$. It is almost immediate from the definitions 
(see \cite{KSS} eq.~(3.7)) that there exists $\delta\in \af^*$ 
such that for all $v\in V^\infty, \eta\in\Upsilon$ there is a constant $C_{v,\eta}>0$ 
such that 
\begin{equation} \label{ibound} 
|m_{v,\eta} (a)| \leq C_{v, \eta}  a^{\delta} \qquad (a\in {A^-}) \, .\end{equation} 

\par Fix an element $X\in \af$ as in Lemma \ref{lem1}. After rescaling, we may assume that 
$\min_{\alpha \in \Sigma^-(\gf, \af)}  \alpha(X)=1$.
For all $v\in V^\infty$ and $\eta\in\Upsilon$ we define 
\begin{equation*} 
F_{v,\eta}(t):= m_{v,\eta} (\exp(tX)) \qquad (t\geq 0)\,  .
\end{equation*} 

We say that $\Lambda\in \R \cup\{-\infty\}$ bounds $\eta\in\Upsilon$, 
provided for all $\Lambda'>\Lambda$ and  all $v\in V$, 
\begin{equation} \label{ibound2} 
\sup_{t\geq 0} \,e^{-t \Lambda'} |F_{v,\eta} (t)| <\infty \, .
\end{equation}
Let $\Upsilon_\Lambda\subset\Upsilon$ denote the space of elements bounded by $\Lambda$,
then $\Upsilon_{\Lambda_1}  \subset \Upsilon_{\Lambda_2}$ for $\Lambda_1\le\Lambda_2$.  
It follows from (\ref{ibound}) that 
\begin{equation}\label{Ups}
\Upsilon=\cup_\Lambda\Upsilon_\Lambda,
\end{equation}
and from Lemma \ref{lem1} that 
\begin{equation}\label{zero}
\Upsilon_{-\infty}=\cap_\Lambda\Upsilon_\Lambda=\{0\}
\end{equation}

The element $X$ acts on the space 
$(V/\bar\qf_1 V)^{L\cap K\cap H}$. First note that 
$\lf\cap\hf=\lf_n+\left((\mf+\af)\cap\hf\right)$
is normalized by $\af$. Hence so is $\bar\qf_1$ and thus
$X$ acts on $V/\bar\qf_1 V$.
Likewise, since $\af =\af\cap\zf(\lf) + \af\cap \hf$
we find that $\Ad(l)X=X \mod \lf\cap\hf$ for $l\in L\cap K\cap H$, and hence 
$X$ preserves the space of $L\cap K\cap H$-invariant vectors in the quotient. 

We denote by $\Xi$ the set of values $-\re\lambda$
where $\lambda\in\C$ is an eigenvalue for $X$ on
$(V/\bar\qf_1 V)^{L\cap K\cap H}$, and write
$\Xi=\{\mu_1,\dots,\mu_l\}$ where 
$$\mu_{l+1}:=-\infty<\mu_l<\dots<\mu_1<\mu_0:=+\infty.$$ 
Let $m_1,\dots, m_l$ denote
the sums of the algebraic multiplicities of the corresponding eigenvalues 
$\lambda$. Then 
\begin{equation*}
m_1+\dots+m_l=n_0:=\dim (V/\bar\qf_1 V)^{L\cap K\cap H}.
\end{equation*}
We shall prove:
\begin{enumerate}
\item\label{one}
$\Upsilon_\Lambda=\Upsilon_{\mu_{k+1}}$ for $\mu_{k+1}\le\Lambda<\mu_{k}$ and
$k=0,\dots,l$ 
\item\label{two} The codimension
of $\Upsilon_{\mu_{k+1}}$ in $\Upsilon_{\mu_{k}}$ is  at most $m_{k}$ ($k=1,\dots,l$). 
\end{enumerate} 
It is easily seen that these statements together with (\ref{Ups}) and
(\ref{zero}) 
imply the theorem. Before proving (\ref{one}) and (\ref{two})
we need some preparations.

Let $\bar w_1, \ldots, \bar w_{n_0}$ be a basis for $(V/\bar\qf_1 V)^{L\cap K\cap H}$ 
and let $B$ denote the corresponding $n_0\times n_0$-matrix defined by 
$$X\bar w_j=\sum_{k} b_{jk} \bar w_k$$
for $j=1,\dots,n_0$.
We choose a representative $w_j \in V$ for each $\bar w_j$
and define 
$$u_j=Xw_j-\sum_k b_{jk}w_k\in \bar\qf_1 V.$$

We can arrange that  $B$ 
consists of block matrices $B_1,\dots,B_l$ along the diagonal, such that each
$B_k$ is an $m_k\times m_k$ matrix all of whose eigenvalues
have real part $-\mu_k$.
In the following we shall make the identification 
\begin{equation*}
\C^{n_0}=\C^{m_1}\times \dots\times \C^{m_l}
\end{equation*}
and write elements $x\in\C^{n_0}$ accordingly
as $x=(x_1,\dots,x_l)$.  

For a given $\eta\in\Upsilon$ let 
$$F(t)=(F_1(t),\dots,F_l(t))\in \C^{n_0}$$
where $F_k(t)\in\C^{m_k}$ is the $m_k$-tuple with entries $F_{w_j,\eta}(t)$,
corresponding to the $k$-th block of $B$.
Likewise we put
$$
R(t)=(R_1(t),\dots,R_l(t))\in \C^{n_0}$$
where $R_k(t)$ has entries
$F_{u_j,\eta}(t)$.
Then for each $k$,
$$ F_k'(t) = -B_k  F_k(t) - R_k(t). $$
Hence
\begin{equation} \label{eqk} 
F_k(t) = e^{-tB_k}c_0- e^{-tB_k}\int_0^t e^{sB_k} R_k(s) \,ds\,
\end{equation}
where  $c_0=F_k(0)$.

\par  Next we recall from \cite{KSS}, proof of Thm.~3.2 (with
$\af^+,\bar\nf$ interchanged by $\af^-,\nf$), that the elements in
$\bar\nf V$ satisfy improved bounds as follows. 
Assume $\eta\in\Upsilon_\Lambda$
and $\Lambda'>\Lambda$. Then  
(\ref{ibound2}) is valid for all $v\in V$, and it follows 
for all $ u \in \bar\nf V$ that
\begin{equation}\label{ibound3} 
\sup_{t\geq 0} \, e^{-t (\Lambda'-1)} |F_{u,\eta} (t)| <\infty  . 
\end{equation}

The bound (\ref{ibound3}) is valid for $u\in \bar\qf_1V$ as well.
To see this we note first that since $\af =\af\cap\zf(\lf) + \af\cap \hf$ we have 
\begin{equation}\label{XY} 
m_{v,\eta}(\exp(tX))=m_{v,\eta}(\exp(tY)), \qquad(v\in V),
\end{equation}
for some $Y\in\af$ which centralizes $\lf$.
Then $u= \sum_j X_jv_j +u'$ with $X_j\in \lf\cap\hf$, $v_j\in V$ and 
$u' \in \bar\nf V$, and it follows that 
$YX_jv_j=X_jYv_j\in \hf V$. Hence $m_{u,\eta}(\exp(tY))=m_{u',\eta}(\exp(tY))$.
We conclude that
$$F_{u,\eta}=F_{u',\eta}$$
and hence (\ref{ibound3}) is valid as claimed.
We conclude the existence of a constant such that 
\begin{equation}\label{ibound4} 
|R_k(t)|\le Ce^{t (\Lambda'-1)}. 
\end{equation}

Based on (\ref{ibound4})
we shall provide the following two estimates for $F_k(t)$:
\begin{equation}\label{estimate}
\sup_{t\ge 0}\, e^{-t\gamma} |F_k(t)|<\infty,\quad 
\forall\gamma>\max\{\mu_k,\Lambda-1\}
\end{equation}
for all $k=1,\dots, l$, and 
\begin{equation}\label{better estimate}
\sup_{t\ge 0}\, e^{-t\gamma} |F_k(t)|<\infty,\quad \forall\gamma>\Lambda-1
\end{equation}
for those $k=1,\dots$ for which $\mu_k> \Lambda$.

Let $\gamma>\max\{\mu_k,\Lambda-1\}$ and
let $\Lambda'\in (\Lambda,\gamma+1)$.
It is clear that the first term in (\ref{eqk}) is bounded 
by a polynomial times $e^{t\mu_k}$. 
Applying (\ref{ibound4}) we see that the integrand 
$e^{sB_k} R_k (s)$ of the second term
is dominated by a polynomial times 
$e^{s(-\mu_k+\Lambda'-1)}$.
It follows that $|F_k(t)|$ is
is dominated by a polynomial times $e^{t\max\{\mu_k,\Lambda'-1\}}$,
and this implies (\ref{estimate}).

Before we prove the second estimate we note the following fact.
If $\mu_k>\Lambda'-1$ then
$e^{sB_k} R_k (s)$ is integrable to infinity. Hence if $\mu_k>\Lambda-1$,
we can replace
the solution formula
(\ref{eqk}) by
\begin{equation} \label{eqk1} 
F_k(t) = e^{-tB_k}c_\infty+ e^{-tB_k}\int_t^\infty e^{sB_k} R_k (s) \,ds
\end{equation}
where $c_\infty=\lim_{s\to\infty} e^{sB_k} F_k(s)$.
Note that with (\ref{ibound4}) the equation
(\ref{eqk1}) implies (\ref{better estimate}) 
for every $k$ for which the limit $c_\infty$ vanishes.

For the proof of (\ref{better estimate}) we assume
$\mu_k > \Lambda$.
Then it follows from (\ref{ibound2}) with $\Lambda<\Lambda'<\mu_k$ 
that $\lim_{s\to\infty} e^{sB_k} F_k(s)=0$. 
Hence $c_\infty=0$ and (\ref{eqk1}) implies
(\ref{better estimate}).

We are now ready to prove our claims (\ref{one}) and (\ref{two}).
Let $\Lambda\in\R$ and let
$\mu_\Lambda=\max\Xi\cap \R_{\le\Lambda}$ (with $\max\emptyset=-\infty$).
Let 
$\eta\in\Upsilon_\Lambda$. 
We will show 
\begin{equation}\label{Upsi}  
\eta\in\Upsilon_{\max\{\mu_\Lambda,\Lambda-1\}}.
\end{equation}
By iteration this will then imply that
$\Upsilon_\Lambda=\Upsilon_{\mu_\Lambda}$
as stated in (\ref{one}). Indeed, if $\Lambda-1\le\mu_\Lambda$
then this follows immediately.
Otherwise (\ref{Upsi}) implies
$\Upsilon_\Lambda=\Upsilon_{\Lambda-1}$. Since
$\Lambda$ was arbitrary we reach $\Upsilon_\Lambda=\Upsilon_{\mu_\Lambda}$ after
finitely many steps.

Let $\lambda'>\max\{\mu_\Lambda,\Lambda-1\}$. We need to prove
\begin{equation}\label{claim1} 
\sup_{t\ge 0}\, e^{-t\lambda'} |F_{v,\eta}(t)| < \infty
\end{equation}
for all $v\in V$.

Note that for $m\in L\cap K\cap H$ we have $F_{mv,\eta}=F_{v,\eta}$ (see (\ref{XY}))
and hence $F_{v,\eta}=F_{v_0,\eta}$, where $v_0$ is the projection
of $v$ to $V^{L\cap K\cap H}$. Hence for every $v\in V$ we have that $F_{v,\eta}$
is a linear combination the functions $F_{w_j,\eta}$ plus 
$F_{u,\eta}$ for some $u\in \bar\qf_0 V$. 
Thus it follows from (\ref{ibound3}) that it suffices to establish
(\ref{claim1}) for $v=w_j$ for all $j$, and hence it suffices to show
\begin{equation}\label{claim2} 
\sup_{t\ge 0}\, e^{-t\lambda'} |F_k(t)| < \infty
\end{equation}
for $k=1,\dots,l$. If $\mu_k\le\mu_\Lambda$ then this is 
clear from (\ref{estimate}).
On the other hand, if $\mu_k>\mu_\Lambda$ then $\mu_k>\Lambda$
and (\ref{claim2}) follows from (\ref{better estimate}).

Next we show (\ref{two}).
Let $1\le k_0\le l$ and $\eta\in \Upsilon_{\Lambda}$
where $\Lambda=\mu_{k_0}$. 
Then $\mu_{k_0}>\Lambda-1$ and according to 
(\ref{eqk1}) the limit
$\lim_{s\to\infty} e^{sB_{k}}F_{k}(s)$ exists for every $k\le k_0$.
Moreover, for $k< k_0$ we have $\mu_k>\Lambda$ and the limit vanishes
as seen in the proof of (\ref{better estimate}) above. 
If we assume that this limit is zero also for $k=k_0$,
then (\ref{better estimate}) is valid for all $k\leq k_0$,
from which we conclude as above that
(\ref{Upsi}) holds, with $\mu_\Lambda$ now replaced by $\mu_{k_0+1}$.
By step (\ref{one}) this implies 
that $\eta\in\Upsilon_{\mu_{k_0+1}}$. Hence
$$\Upsilon_{\mu_{k_0+1}}=\{\eta\in \Upsilon_{\mu_{k_0}}\mid \lim_{s\to\infty} 
e^{sB_{k_0}}F_{k_0}(s)=0\}$$
and as $\eta\mapsto \lim_{s\to\infty} e^{sB_{k_0}}F_{k_0}(s)$
is linear into $\C^{m_{k_0}}$, (\ref{two}) follows. 
\end{proof}

\section{The spherical subrepresentation theorem}\label{sphsub}

\par Let $Z=G/H$ be real spherical with $PH$ open, and recall
the definition (\ref{eq1}) of the subalgebra $\bar\qf_1\subset\bar\qf$.
The algebraic version of the 
subrepresentation theorem is:

\begin{cor}\label{C-lem} 
Let $V$ be a Harish-Chandra module and assume that $\mathrm{Hom}_H (V^\infty, \C)\neq \{0\}$. 
Then $(V/\bar\qf_1 V)^{L\cap K\cap H}\neq \{0\}$. 
\end{cor}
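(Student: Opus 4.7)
The plan is essentially trivial: this corollary is just the contrapositive-style consequence of the multiplicity bound in Theorem \ref{thm1}. Indeed, Theorem \ref{thm1} asserts
$$\dim \mathrm{Hom}_H(V^\infty, \C) \leq \dim (V/\bar\qf_1 V)^{L\cap K\cap H},$$
so if the left-hand side is at least $1$ (which is what the hypothesis $\mathrm{Hom}_H(V^\infty, \C) \neq \{0\}$ says), then the right-hand side is at least $1$, i.e. $(V/\bar\qf_1 V)^{L\cap K\cap H} \neq \{0\}$.

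Since there is no additional content, I would simply write the proof as a one-line invocation of Theorem \ref{thm1}. There is no obstacle to overcome; the entire analytic and structural work has already been absorbed into the multiplicity bound. The only thing worth remarking, perhaps, is that the corollary deserves its name as a ``spherical subrepresentation theorem'' precisely because a nonzero element of $(V/\bar\qf_1 V)^{L\cap K\cap H}$ gives, via Frobenius reciprocity with respect to $\bar Q$ and a suitable finite-dimensional $L$-representation $\tau$ extracted from the quotient $V/\bar\qf_1 V$, an embedding of $V$ into $\mathrm{Ind}_{\bar Q}^G \tau$; but that interpretation is the content of the full spherical subrepresentation theorem discussed in the introduction and not needed for the statement of Corollary \ref{C-lem} itself.
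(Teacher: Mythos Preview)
Your proof is correct and matches the paper's own proof exactly: the paper also dispatches Corollary~\ref{C-lem} in one line as ``Immediate from Theorem~\ref{thm1}.'' Your additional remarks about Frobenius reciprocity are accurate but belong to the subsequent theorem rather than to this corollary.
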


\begin{proof} Immediate from Theorem \ref{thm1}.  
\end{proof}

Let $\bar Q=\theta(Q)$ denote the parabolic subgroup opposite to $Q$, with
Levi decomposition $\bar Q=L\bar U$.
If $\tau$ is a finite dimensional representation of $\bar Q$, then we write 
$\mathrm{Ind}_{\bar Q}^G$ for the induced $(\gf, K)$-module of $K$-finite smooth sections 
of the $G$-equivariant vector bundle $\tau \times_{\bar Q} G \to \bar Q\bs G$.

\begin{theorem} 
Let $V$ be an irreducible Harish-Chandra module such that 
$\mathrm{Hom}_H (V^\infty, \C)\neq \{0\}$.  

Then there exists a finite 
dimensional irreducible representation 
$\tau$ of $\bar Q$, trivial on $\bar U$, such that
 $\mathrm{Hom}_{(\lf\cap\hf,L\cap H\cap K)}(\tau,\C)\neq \{0\}$, and an embedding
$$V \hookrightarrow \mathrm{Ind}_{\bar Q}^G \tau\, .$$
\end{theorem}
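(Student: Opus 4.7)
The plan is to combine Corollary~\ref{C-lem} with Frobenius reciprocity, following the pattern of Casselman's subrepresentation theorem.

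First I would apply Corollary~\ref{C-lem} to obtain a nonzero $(\lf\cap\hf,L\cap K\cap H)$-invariant functional $\tilde\xi$ on $V/\bar\qf_1 V$. Since $\lf_n\subset\lf\cap\hf$, the subalgebra $\bar\qf_0:=\lf_n+\bar\uf$ is contained in $\bar\qf_1$, and since $\bar\nf\subset\bar\qf_0$, the functional $\tilde\xi$ descends to the finite-dimensional $L/L_n$-module $W:=V/\bar\qf_0 V$, which is a quotient of the finite-dimensional $V/\bar\nf V$.

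Frobenius reciprocity gives
\[
\mathrm{Hom}_{(\gf,K)}\bigl(V,\mathrm{Ind}_{\bar Q}^G\tau\bigr)\cong\mathrm{Hom}_{(\lf,L\cap K)}\bigl(V/\bar\uf V,\tau\bigr)
\]
for any finite-dimensional $L$-representation $\tau$ inflated trivially to $\bar Q$, and any nonzero element on the right extends to an embedding by irreducibility of $V$. The task thus reduces to producing an irreducible finite-dimensional $(\lf,L\cap K)$-quotient of $V/\bar\uf V$ carrying a nonzero $(\lf\cap\hf,L\cap K\cap H)$-invariant functional.

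For a first (not necessarily irreducible) such quotient, observe that the subspace $((V/\bar\uf V)^*)^{\lf_n}$ is canonically identified with $W^*$, and is $(\lf,L\cap K)$-stable since $\lf_n$ is an ideal of $\lf$ normalized by $L\cap K$. Setting $N:=U(\lf)(L\cap K)\tilde\xi\subset W^*$, the natural pairing $W\times N\to\C$ produces an $(\lf,L\cap K)$-equivariant surjection $W\twoheadrightarrow\tau_0:=N^*$, and $\tilde\xi\in N$ corresponds via double duality to a nonzero $(\lf\cap\hf,L\cap K\cap H)$-invariant functional $\zeta_0$ on $\tau_0$. Frobenius reciprocity then gives an embedding $V\hookrightarrow\mathrm{Ind}_{\bar Q}^G\tau_0$ whose Frobenius dual $\hat f\colon V\to\tau_0$ satisfies $\zeta_0\circ\hat f=\tilde\xi$. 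To pass to irreducible $\tau$, one iteratively reduces $\tau_0$: if $\tau_0$ contains a proper $L$-submodule $\tau'\subset\ker\zeta_0$, then $\hat f(V)\not\subset\tau'$ (otherwise $\tilde\xi=\zeta_0\circ\hat f$ would vanish), so exactness of induction yields the descended embedding $V\hookrightarrow\mathrm{Ind}_{\bar Q}^G(\tau_0/\tau')$ with nonzero descended functional; alternatively, if $U(\lf)(L\cap K)\hat f(V)\subsetneq\tau_0$, replace $\tau_0$ by this submodule and $\zeta_0$ by its restriction (which remains nonzero since $\zeta_0\circ\hat f\neq 0$).

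The main obstacle is the termination of this reduction at an irreducible module. The procedure halts only when $\tau_0$ is cyclically generated by $\hat f(V)$ as an $(\lf,L\cap K)$-module and contains no nonzero $L$-submodule in $\ker\zeta_0$; concluding that such a terminal $\tau_0$ is irreducible requires a further argument using the irreducibility of $V$ together with the $L\cap K\cap H$-invariance of $\zeta_0$, ruling out a proper $L$-submodule of $\tau_0$ that both meets $\hat f(V)$ nontrivially and supports $\zeta_0$.
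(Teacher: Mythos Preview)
Your overall strategy coincides with the paper's: invoke Corollary~\ref{C-lem} to obtain a nonzero $(\lf\cap\hf, L\cap K\cap H)$-invariant functional, pull it back to the finite-dimensional $L$-module $V/\bar\qf_0 V$ (with $\bar\qf_0=\lf_n+\bar\uf$), and then apply Frobenius reciprocity for $\bar Q$. The divergence lies entirely in how the irreducible $\tau$ is produced.

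The paper disposes of this in two sentences. It selects directly an irreducible $\bar Q$-subrepresentation $\tau$ of $V/\bar\qf_0 V$ with $\Hom_{(\lf\cap\hf, L\cap K\cap H)}(\tau,\C)\neq 0$, observes that the $L$-equivariant surjection $V/\bar\uf V\to V/\bar\qf_0 V$ yields $\Hom_L(V/\bar\uf V,\tau)\neq 0$, and concludes by Frobenius reciprocity (citing Hecht--Schmid, Theorem~4.9) together with the irreducibility of $V$. From the paper's standpoint your construction of the cyclic module $N=\U(\lf)(L\cap K)\tilde\xi$ and the subsequent two-sided reduction procedure are detours.

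That said, the obstacle you explicitly flag is precisely the point on which the paper is terse. For Frobenius reciprocity one needs $\tau$ to be a \emph{quotient} of $V/\bar\uf V$; since $\lf_n$ acts trivially on $\tau$ this amounts to $\tau$ being a quotient of $V/\bar\qf_0 V$. The paper, however, takes $\tau$ as a \emph{submodule} of $V/\bar\qf_0 V$, and the passage from submodule to quotient is immediate only if $V/\bar\qf_0 V$ is semisimple as an $L$-module --- which is not argued (the $A$-action on $V/\bar\nf V$, and hence on its quotient $V/\bar\qf_0 V$, can in general have nontrivial Jordan blocks). So the gap you isolate in your own argument is genuine, and the paper's brief treatment does not visibly resolve it either; it simply asserts the step.
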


\begin{rmk}\label{second remark}
Assume that $L\cap H$ has finitely many connected components, and
recall (see Proposition \ref{lst}) that this assumption 
is valid for example when $G$ and $H$ are algebraic. Then
it follows from Lemma \ref{LcapH} that 
$\mathrm{Hom}_{L\cap H}(\tau,\C)\neq \{0\}$
for
the representation
$\tau$ above.
\end{rmk}

\begin{proof} 
Let $\bar\qf_0=\lf_n+\bar\uf$, then $\bar\qf_1=\bar\qf_0+(\lf\cap\hf)$.
The space $V/\bar\qf_0 V$ is
finite-dimensional since $\bar\nf\subset\bar\qf_0$,
and it carries compatible actions of
$\bar\qf$ and $L\cap K$ since $\bar\qf_0$ is an 
$L\cap K$-invariant ideal in $\bar\qf$.  Since $L$ is reductive
the action of the pair $(\lf,L\cap K)$ lifts uniquely 
to a representation of $L$ on $V/\bar\qf_0 V$.
We can then extend to an action of $\bar Q=L\bar U$, 
which is trivial on $\bar U$ and compatible
with the action of $\bar\qf$.

The quotient map
$V/\bar \qf_0V\to V/\bar\qf_1V$ is clearly a homomorphism for the pair
$(\lf\cap\hf,L\cap H\cap K)$. By Corollary \ref{C-lem} the module
$V/\bar \qf_1V$ has a non-zero vector fixed by the compact group
$L\cap K\cap H$. Hence also its dual admits such a vector,
and by composing with the quotient map we obtain that  
$\mathrm{Hom}_{(\lf\cap\hf,L\cap H\cap K)}(V/\bar\qf_0 V,\C)\neq \{0\}$.

Let $\tau$ be an irreducible $\bar Q$-subrepresentation
of $V/\bar\qf_0V$ for which  $\mathrm{Hom}_{(\lf\cap\hf,L\cap H\cap K)}(\tau,\C)\neq \{0\}$.
Since the quotient map $V/\bar \uf V\to V/\bar \qf_0V$ is $L$-equivariant, we have
$$\Hom_L(V/\bar \uf V,\tau)\neq \{0 \}.$$
As $V$ is irreducible, the desired embedding follows by
Frobenius reciprocity
(see \cite{HS}, Theorem 4.9, and note that our induction is not normalized). 
\end{proof}

\section{Regular singularities}\label{appendix}

The goal of this section is to provide a proof for the expansion 
(\ref{power}).  To begin with let us first recall that matrix coefficients on $Z$ satisfy 
certain systems of differential equations. For that we fix a Harish-Chandra module $V$ 
and a $K$-type $\tau$ occurring in $V$. We denote by $V[\tau]$ the $\tau$-isotypical part
of $V$, and consider for $\eta\in  (V^{-\infty})^H$ and $v\in V[\tau]$ 
the matrix coefficient
$$f(a)=m_{v, \eta}(a)= \eta(\pi(a^{-1})v)\, $$
on $A$ (where $A$ originates from the Iwasawa decomposition $G=KAN$
chosen in Section \ref{str}).

For simplicity we assume that $V$ is irreducible and obtain that 
the center $\Zc(\gf)$ of $\U(\gf)$ 
acts by scalars on $V$ 
(otherwise we replace the annihilating ideal of $V$ in $\Zc(\gf)$ 
with an ideal of finite co-dimension, and proceed as before).
The theory of $\tau$-radial parts (see Remark \ref{rmk2} below)
then gives a system of differential equations for $f$ on a
subcone of $A$. 

\par 
Let $R_1, \ldots, R_n$ be a basis of 
root vectors of $\uf$, say $R_j\in\gf^{\alpha_j}$ corresponds to the 
root $\alpha_j$. Set $Q_j:=\theta(R_j)\in \gf^{-\alpha_j}$. 
For $t\in\R$ we let 
\begin{equation*}
\af_t=\{ X\in\af \mid \alpha_j(X)<-t ,\, j=1,\dots,n\}
\end{equation*}
and $A_t:=\exp(\af_t)$. Let
$D_\e:=\{|z|<\e\}^n \subset \C^n$
for $\e>0$, and 
define
\begin{equation}\label{iota}
\iota: A_t \to \C^n , \ \ a\mapsto \iota(a)\:=(a^{\alpha_j})_{j=1,\dots,n}\, 
\end{equation}
where $a^{\alpha_j}=e^{\alpha_j(X)}$ for $a=\exp(X)$, 
then $\iota(A_t)\subset D_{e^{-t}}.$

\begin{lemma}\label{app1} There exists $t\ge 0$ such that 
for all $a\in A_t$ one has 
\begin{equation} \label{decomp} 
\gf= \Ad(a^{-1}) \kf +\af_Z +\hf \end{equation} 
\end{lemma}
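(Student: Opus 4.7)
The plan is to show that the right-hand side $V_a := \Ad(a^{-1})\kf + \af_Z + \hf$ of (\ref{decomp}) automatically contains $\hf + \af_Z + \mf_Z$ and that its projection onto the remaining complement $\uf$ exhausts $\uf$ for $a$ deep enough in $A_t$. Combined with the decomposition (\ref{ldeco}), $\gf = \hf \oplus \af_Z \oplus \mf_Z \oplus \uf$, this immediately gives $V_a = \gf$. First I would observe that since $\mf \subset \kf$ centralizes $\af$, the inclusion $\mf = \Ad(a^{-1})\mf \subset \Ad(a^{-1})\kf$ holds for every $a \in A$; in particular $\mf_Z \subset V_a$. Thus $V_a \supset \hf + \af_Z + \mf_Z$ without any condition on $a$.

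To produce the $\uf$-direction I would use $\theta$-symmetrized root vectors. For each $R_j \in \gf^{\alpha_j}$ in the chosen basis of $\uf$, the element $R_j + Q_j = R_j + \theta(R_j)$ lies in $\kf$; since $\Ad(a^{-1})$ acts on $\gf^{\pm\alpha_j}$ by the scalar $a^{\mp\alpha_j}$, rescaling by $a^{\alpha_j}$ gives
$$a^{\alpha_j}\Ad(a^{-1})(R_j + Q_j) = R_j + a^{2\alpha_j}Q_j \in V_a.$$
Let $\pi: \gf \to \uf$ denote the projection along (\ref{ldeco}), and let $T: \uf \to \uf$ be the $a$-independent linear map with $T(R_j) := \pi(Q_j)$. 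Then the image under $\pi$ of these rescaled elements is $R_j + a^{2\alpha_j}T(R_j)$, which in the basis $R_1,\dots,R_n$ corresponds to the matrix $I + D_a[T]$ with $D_a = \diag(a^{2\alpha_1}, \dots, a^{2\alpha_n})$.

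The conclusion is a routine perturbation-of-identity argument: for $a = \exp X \in A_t$ we have $|a^{2\alpha_j}| = e^{2\alpha_j(X)} < e^{-2t}$ uniformly in $j$ and $a$, so $\|D_a\|$ can be made as small as we wish by taking $t$ large. For any $t$ with $e^{-2t}\|T\| < 1$, the matrix $I + D_a[T]$ is invertible for every $a \in A_t$, whence $\pi(V_a) = \uf$ and $V_a = \gf$. The only genuine insight in the argument is the choice to rescale $\Ad(a^{-1})(R_j + Q_j)$ by $a^{\alpha_j}$: this trades an exponentially growing component in the $\uf$-direction for an exponentially small correction in the $\bar\uf$-direction, after which the claim reduces to a standard invertibility estimate.
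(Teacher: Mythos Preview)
Your argument is correct and is essentially the paper's own proof: both observe $\mf_Z\subset\Ad(a^{-1})\kf$, both use the rescaled element $a^{\alpha_j}\Ad(a^{-1})(R_j+Q_j)=R_j+a^{2\alpha_j}Q_j$, decompose $Q_j$ via (\ref{ldeco}), and conclude by inverting a perturbation of the identity. The only cosmetic difference is that you phrase the last step in terms of the projection $\pi$ and the map $T$, whereas the paper writes out the matrix entries $c_{ij}$ explicitly; note that the matrix of $R_j\mapsto a^{2\alpha_j}T(R_j)$ is $[T]D_a$ rather than $D_a[T]$, matching the paper's $(c_{ij}a^{2\alpha_j})_{ij}$, though of course this does not affect the invertibility argument.
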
 

For symmetric spaces this is obtained with $t=0$ in \cite{vdB2} Lemma 1.5.  

\begin{proof} Recall the decomposition
(\ref{ldeco}). As $\mf_Z$ is centralized by $A$, we have
$\mf_Z\subset\Ad(a^{-1})\kf$ for every $a\in A$. Hence
it suffices to show that each $R_j\in\uf$ decomposes according to (\ref{decomp}).
For all $1\leq j \leq n$ we obtain from  (\ref{ldeco})
\begin{equation} \label{decomp5} 
Q_j= X_{\mf, j} + X_{\af, j} + X_{\hf, j} + \sum_{i=1}^n c_{ij} R_i
\end{equation}
with $X_{\mf, j} \in \mf_Z$, $X_{\af, j} \in \af_Z$ and $X_{\hf, j} \in \hf$. 
For $a\in A$ we have 
\begin{equation} \label{decomp3} R_j = a^{\alpha_j} \Ad(a^{-1})(Q_j + R_j)  - 
a^{2\alpha_j} Q_j\, ,\end{equation} 
and inserting (\ref{decomp5}) for the last term 
in (\ref{decomp3}) we obtain
\begin{equation} \label {decomp2} 
\begin{aligned}
R_j+  a^{2\alpha_j} &\sum_{i=1}^n c_{ij}R_i 
\\&=  
a^{\alpha_j} \Ad(a^{-1}) \underbrace{(Q_j +R_j)}_{\in \kf} 
-a^{2\alpha_j}(X_{\mf, j} + X_{\af, j} + X_{\hf, j}) \, 
\end{aligned}\end{equation} 
Consider the $n\times n$-matrix
\begin{equation*}
\1+ (c_{ij} a^{2\alpha_j})_{ij}.
\end{equation*}
It is invertible for $a\in A_t $ for sufficiently large $t$. 
This proves (\ref{decomp}).
\end{proof}

In fact, a more precise version of the decomposition (\ref{decomp}) is obtained as follows.
Let $(X_{j})_j$, $(Y_{k})_k$ 
and $(Z_{l})_l$ be fixed bases for $\kf$, $\af_Z$ and $\hf$,
respectively. Then the previous proof shows
(see (\ref{iota}) and (\ref{decomp2}))

\begin{lemma}\label{app2} 
There exists $t>0$ such that for every $X\in\gf$ there exist 
holomorphic functions 
$f_j, g_k, h_l \in \mathcal{O}(D_{e^{-t}})$
such that
for all $a\in A_t$ one has  
$$X= 
\sum_{j=1}^{\dim\kf} f_j(\iota(a)) \Ad(a^{-1})X_{j} +  
\sum_{k=1}^{\dim\af_Z} g_k(\iota(a))  Y_{k} + 
\sum_{l=1}^{\dim \hf } h_l(\iota(a) ) Z_{l}  .$$
Moreover, if $X\in\uf$, then this can be attained with
$$f_j, g_k, h_l \in \mathcal{O}_0(D_{e^{-t}})=\{\varphi\in \mathcal{O}(D_{e^{-t}})\mid \varphi(0)=0\}.$$
\end{lemma}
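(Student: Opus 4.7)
The plan is to reduce to the four summands in the direct sum decomposition (\ref{ldeco}), $\gf = \hf \oplus \af_Z \oplus \mf_Z \oplus \uf$, and sum the decompositions obtained for each summand. For $X\in\hf$ we take $(h_l)$ to be the constant coordinates of $X$ in the basis $(Z_l)$ and all other coefficients zero, and similarly for $X\in\af_Z$. For $X\in\mf_Z\subset\mf\subset\kf$ we use that $\af$ centralizes $\mf$, so $X=\Ad(a^{-1})X$, and expand in the basis $(X_j)$ of $\kf$ to obtain constant $f_j$. All three of these cases produce constant --- hence trivially holomorphic --- coefficients, so the real work is the $\uf$-component.

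For $X\in\uf$ linearity reduces the claim to $X=R_j$. The proof of Lemma~\ref{app1} rewrites equation (\ref{decomp2}) as a linear system
$$(I + M(a))\,\vec R = \vec S(a),$$
where $M(a)=(c_{ij}\,a^{2\alpha_j})_{i,j}$ and the $i$-th entry of $\vec S(a)$ is $a^{\alpha_i}\Ad(a^{-1})(Q_i+R_i) - a^{2\alpha_i}(X_{\mf,i}+X_{\af,i}+X_{\hf,i})$. The entries of $M(a)$ are monomials of degree two in the coordinates of $\iota(a)\in\C^n$, so if $t$ is chosen large enough that $\det(I+M)$ does not vanish on $D_{e^{-t}}$, then $(I+M(a))^{-1}$ has entries holomorphic in $\iota(a)$ on $D_{e^{-t}}$ and equals $I$ at $\iota(a)=0$. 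Solving for $R_j$ and expanding the $\kf$-elements $Q_i+R_i$ and $X_{\mf,i}$ in the basis $(X_p)$ --- using $\Ad(a^{-1})X_{\mf,i}=X_{\mf,i}$ to rewrite $X_{\mf,i}$ as $\sum_p \mu_{ip}\Ad(a^{-1})X_p$ --- and $X_{\af,i}$, $X_{\hf,i}$ in the bases $(Y_k)$, $(Z_l)$, each coefficient in the resulting formula is a sum of entries of $(I+M(a))^{-1}$ multiplied by a factor $a^{\alpha_i}$ or $a^{2\alpha_i}$, i.e.\ a coordinate function, or product of two coordinate functions, of $\iota(a)$. Hence every coefficient lies in $\mathcal{O}_0(D_{e^{-t}})$.

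The only delicate point is the uniform choice of $t$, but since $M(a)$ depends only on the fixed root data and not on $X$, a single $t$ works simultaneously for all $R_j$, and the other three summands impose no restriction on $t$.
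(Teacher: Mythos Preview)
Your proof is correct and follows exactly the approach of the paper, which simply refers back to the proof of Lemma~\ref{app1} (in particular to equation~(\ref{decomp2})) for the decomposition of elements of $\uf$, and implicitly uses (\ref{ldeco}) for the remaining summands. You have spelled out in more detail than the paper does why inverting $\1+M(a)$ yields coefficients in $\mathcal{O}(D_{e^{-t}})$, and why the extra factors $a^{\alpha_i}$, $a^{2\alpha_i}$ force these coefficients into $\mathcal{O}_0(D_{e^{-t}})$ when $X\in\uf$.
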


Let $(U_j)_j$ be a fixed homogeneous basis of $\U(\kf)$, 
$(V_k)_k$ one of $\U(\af_Z)$ and $(W_l)_l$ of $\U(\hf)$. 
We then obtain the following from Lemma \ref{app1} and 
Lemma \ref{app2}.

\begin{lemma}\label{app3} There exists $t>0$ such that
for every $u\in \U(\gf)$ there exist $f_{j,k,l}\in \mathcal{O}(D_{e^{-t}})$
such that
\begin{equation} \label{deco} 
u = \sum_{j,k,l} 
f_{j,k,l} (\iota(a)) (\Ad(a^{-1}) U_j) V_k W_l ,\end{equation} for all $a\in A_t$,
where $j,k,l$ extend over all sets of indices with
$$\deg (U_j) + \deg (V_k) +\deg(W_l)\leq \deg(u).$$ 
\end{lemma}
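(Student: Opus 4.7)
The plan is to induct on $n = \deg u$. The base cases $n = 0$ (scalars) and $n = 1$ (which is exactly Lemma \ref{app2}) are immediate. For the inductive step, since $\U_n(\gf)$ is spanned by products of $n$ elements of $\gf$, it suffices to treat a monomial $u = X_1 X_2 \cdots X_n$ with $X_i \in \gf$.

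I would first apply Lemma \ref{app2} to each factor $X_i$, expressing it as an $\Oc(D_{e^{-t}})$-combination of basis elements from $\Ad(a^{-1})\kf$, $\af_Z$, and $\hf$; distributing the product yields $u$ as a sum (with holomorphic coefficients) of products of $n$ ``basic'' factors of these three types. Next I would sort each such product via iterated adjacent transpositions $T_i T_{i+1} = T_{i+1} T_i + [T_i, T_{i+1}]$, placing $\Ad(a^{-1})\kf$-factors leftmost, $\af_Z$-factors in the middle, and $\hf$-factors rightmost. A fully sorted length-$n$ monomial has the form $\Ad(a^{-1})(X_{j_1}\cdots X_{j_r}) \cdot Y_{k_1}\cdots Y_{k_s} \cdot Z_{l_1}\cdots Z_{l_t}$ with $r+s+t=n$, and expands directly in the bases $(U_j)_j, (V_k)_k, (W_l)_l$ to produce a term of the desired form with $\deg U_j + \deg V_k + \deg W_l \le n$. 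Each transposition also leaves a commutator $[T_i, T_{i+1}]$, which shortens the product length to $n-1$ and thus has total degree $\le n-1$; by the induction hypothesis these lower-degree contributions admit the required decomposition, provided we can express the commutator itself in the allowed form. The pure-type commutators are immediate -- $[\Ad(a^{-1})\kf, \Ad(a^{-1})\kf] \subset \Ad(a^{-1})\kf$, $[\af_Z, \af_Z] = 0$, $[\hf, \hf] \subset \hf$ -- as is the $(\af_Z, \hf)$-commutator $[Y, Z]$, which is a fixed $\gf$-element to which Lemma \ref{app2} applies directly.

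The hard part will be the mixed commutators involving a $\Ad(a^{-1})\kf$-factor, namely $[\Ad(a^{-1})X, Y] = \Ad(a^{-1})[X, Y]$ (and analogously with $Z \in \hf$): these produce $\Ad(a^{-1})$ applied to a $\gf$-element rather than a $\gf$-element itself, so Lemma \ref{app2} does not apply directly, and a naive decomposition would introduce non-holomorphic factors $a^{-\alpha}$. I would exploit that $\af_Z \subset \zf(\lf)$ forces $[X, Y]$ (for $X \in \kf$, $Y \in \af_Z$) to lie in $\uf \oplus \bar\uf$ and to decompose as $R - \theta R$ with $R = \sum_\alpha R^\alpha \in \uf$ ($R^\alpha \in \gf^\alpha$, $\alpha \in \Sigma(\uf, \af)$). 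Since $\Ad(a^{-1})R^\alpha = a^{-\alpha}R^\alpha$ and $\Ad(a^{-1})\theta R^\alpha = a^\alpha \theta R^\alpha$, one has the key identity
\[
\Ad(a^{-1})(R - \theta R) \;=\; \Ad(a^{-1})(R + \theta R) \;-\; 2 \sum_\alpha a^\alpha \theta R^\alpha,
\]
whose first summand is $\Ad(a^{-1})$ of the $\kf$-element $R + \theta R$ (absorbed directly into the $\Ad(a^{-1})\U(\kf)$-prefix of the sorted form) and whose second is a holomorphic combination in $\iota(a)$ of the fixed $\bar\uf$-elements $\theta R^\alpha$, each decomposable by Lemma \ref{app2}. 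An analogous argument handles the mixed commutator $[\Ad(a^{-1})X, Z]$ for $Z \in \hf$, completing the induction.
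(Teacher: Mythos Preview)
Your sorting argument has a genuine gap at the very last step. The claim that ``an analogous argument handles the mixed commutator $[\Ad(a^{-1})X,Z]$ for $Z\in\hf$'' is not justified, and in fact the $\theta$-trick you use for $Y\in\af_Z$ does not carry over. That trick relied on two special features: $\af_Z\subset\zf(\lf)$ forces $[X,Y]\in\uf\oplus\bar\uf$, and $\theta Y=-Y$ forces $[X,Y]$ into the $-1$-eigenspace of $\theta$, whence the decomposition $R-\theta R$. For a general $Z\in\hf$ neither holds: $\hf$ is not $\theta$-stable in a real spherical space, and $[X,Z]$ can lie anywhere in $\gf$. Concretely, writing $X=X_\mf+\sum_{\alpha>0}(X_\alpha+\theta X_\alpha)$ one gets
\[
[\Ad(a^{-1})X,Z]=[X_\mf,Z]+\sum_\alpha a^{-\alpha}[X_\alpha,Z]+\sum_\alpha a^{\alpha}[\theta X_\alpha,Z],
\]
and the middle terms carry the non-holomorphic factors $a^{-\alpha}$ multiplying fixed $\gf$-elements that have no reason to combine into anything of the form $\Ad(a^{-1})(\kf\text{-element})$.

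The paper's induction sidesteps this entirely. Rather than decomposing every factor and sorting, it peels off a single left factor: write $u=Xv$ with $\deg v=n-1$, and apply Lemma~\ref{app2} to $X$ alone. The $\Ad(a^{-1})\kf$-part of $X$ is already on the left, so after applying the induction hypothesis to $v$ it merges directly with the $\Ad(a^{-1})U_j$ prefix---no commutator needed. This reduces to the case $X\in\af_Z+\hf$, and now one writes $Xv=vX+[X,v]$. The point is that $[X,v]$ is a \emph{fixed} element of $\U(\gf)_{n-1}$ (no $a$-dependence), so induction applies cleanly; and in $vX$, after expanding $v$ inductively, one only has to move $X\in\af_Z+\hf$ past $W_l\in\U(\hf)$, again producing fixed commutators. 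Thus every commutator that arises is $a$-independent, and the obstruction you hit never appears.
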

 
\begin{proof} Let $t$ be as in Lemma \ref{app2}. We proceed by induction on $\deg(u)$,
the case of degree zero being clear.
Let $u\in \U(\gf)$ and assume $u=Xv$ with $X\in\gf$ and $v\in \U(\gf)$ of degree one less.
Applying Lemma  \ref{app2} to $X$, we see that
it suffices to treat the case where $X\in\af_Z+\hf$. We write
$Xv=[X,v]+vX$ and apply the induction hypothesis to $[X,v]$. 
For the term $vX$ we
apply the induction hypothesis to $v$ and write it as a 
linear combination of elements $(\Ad(a^{-1}) U_j) V_k W_l$
with holomorphic coefficients and $\deg(U_j)+\deg(V_k)+\deg(W_l)\le\deg(v)$. Next we write
$$(\Ad(a^{-1}) U_j) V_k W_l X=(\Ad(a^{-1}) U_j) V_k([W_l,X]+XW_l).$$ 
The induction hypothesis applies to $V_k[W_l,X]$. The terms with $V_kXW_l$
already have the form asserted in (\ref{deco}).
\end{proof}

\begin{rmk}\label{rmk2} The decomposition (\ref{deco}) allows one to define the $\tau$-radial part of an 
element $u\in \U(\gf)^{M\cap H}$. This goes as follows: 
We let $\U(\gf)$ act on $C^\infty(G)$ by right 
differentiation: 
$$ (X\cdot f)(g) = \frac d{dt}\Big|_{t=0} f(g\exp(tX))\qquad (f\in C^\infty(G), g\in G, X\in \gf)\, .$$
Henceforth we regard smooth functions on $Z$ as right $H$-invariant functions on $G$. 
For a $K$-type $(\tau,W_\tau)$
we denote by $C^\infty(Z)_\tau$ the space of functions which are of left $K$-type $\tau$, for example the
matrix coefficients $m_{v,\eta}$ with $v$ in $V[\tau]$.  
\par It is easily seen from the Peter-Weyl theorem
that for every $f\in C^\infty(Z)_\tau$ there exist a unique 
$\Phi\in C^\infty(Z, W_\tau\otimes W_{\tau^*})$ 
such that
$$f(gH)= \Tr (\Phi(gH)),\quad gH\in Z,$$
where $\Tr$ stands for contraction $W_\tau\otimes W_{\tau^*}\to\C$.
Moreover $\Phi(kgH)=(1\otimes \tau^*(k))\Phi(gH)$ for $k\in K$.
In particular, $\Phi(aH)\in W_\tau\otimes W^{M\cap H}_{\tau^*}$ for $a\in A$.
Hence we have a finite sum
\begin{equation}\label{Fjsum}
f(gH)=\sum_j \la F_j(gH),w_j\ra,
\end{equation}
with $F_j\in C^\infty(Z,W_{\tau^*})$ and $w_j\in W_\tau$.
Moreover, $F_j(kgH)=\tau^*(k)(gH)$ for all $k\in K, g\in G$.

\par Let $u\in \U(\gf)^{M\cap H}$ and let $t>0$ 
be as above. According to (\ref{deco}) we can write $u$ as a sum of 
$(\Ad(a^{-1})U) V W$ with $U\in \U(\kf)^{M\cap H}$, $V\in \U(\af_Z)$ and $W\in \U(\hf)$
and coefficients depending holomorphically on $\iota(a)$ for $a\in A_t$. 
In order to compute $ u\cdot f|_{A_t} $ we may assume that $W=\1$ as
$f$ is right $H$-invariant. For $u=(\Ad(a^{-1}))UV$ we then have
$$(u\cdot f)(a) = \sum  \la  (V\cdot F_j)(a), U^t\cdot w_j\ra $$  
We finally arrive at an action of $\U(\gf)^{M\cap H}$ on $C^\infty (A_t, 
W_{\tau^*}^{M\cap H})$ given by 
$${\rm rad}(u)(F)(a):= \sum \tau^*(U) (V\cdot F)(a)\,,
$$
a differential operator with $\End(W_{\tau^*}^{M\cap H})$-valued coefficients.
With this definition we find
$$uf(a)=\sum_j \la\operatorname{rad}(u)(F_j)(a),w_j\ra$$
when $f$ is given by (\ref{Fjsum}).

\par In particular for $f=m_{w,\eta}$ 
the functions $F_j \in C^\infty(A_t, W_{\tau^*}^{M\cap H})$ obtained as above
satisfy the system of differential equations
\begin{equation*}
{\rm rad }(z)(F) = \chi_V(z) F
\qquad (z\in \Zc(\gf))\end{equation*}
with $\chi_V: \Zc(\gf)\to \C$ the infinitesimal character of $V$.  
\end{rmk}

In the decomposition (\ref{deco}) of $u\in \U(\gf)$ we would like to restrict 
the middle parts $V_k$ from $\U(\af_Z)$ to a fixed finite set of elements, independent of
$u$. This will be done at the cost of enlarging the product with an extra factor from $\Zc(\gf)$
(which act by scalars on $V$).

For that we need to recall  parts of the  construction of the Harish-Chandra homomorphism. 
The decomposition $\gf=\nf +\af +\mf +\bar \nf$ results in a 
direct sum decomposition 
$$\U(\gf) = (\nf\, \U(\gf) + \U(\gf)\bar\nf) + \U(\af+ \mf)$$
and allows for a linear projection 
$$\mu_1: \U(\gf) \to \U(\af+ \mf)\, .$$
Recall (\cite{vdB} Lemma 3.6) that $\mu_1$ restricts to an algebra
homomorphism $\Zc(\gf)\to\Zc(\mf+\af)$ and that for $z\in \Zc(\gf)$ 
of degree $d$ one has 
$$z-\mu_1(z)\in \nf \,\U(\gf)_{d-1},$$
(where $\U(\gf)_{d-1}$ signifies elements of degree $\le d-1$).
It follows from the Harish-Chandra isomorphism theorem that  
$\Zc(\mf+\af)$ is finitely generated over $\mu_1(\Zc(\gf))$.
More precisely (see \cite{vdB}, Lemma 3.7), there exist
elements $v_1,\dots,v_r\in\Zc(\mf+\af)$ such that every
$v\in\Zc(\mf+\af)$ can be written as
\begin{equation}\label{vsum}
v=\sum_{j=1}^r \mu_1(z_j)v_j
\end{equation}
with $z_j\in \Zc(\gf)$ and
$\deg(z_j)+\deg(v_j)\le\deg(v)$ for each~$j$.

Further, as $\af$ and $\mf$ commute and as
$\af=\af_Z+(\af\cap\hf)$,
we obtain an algebra homomorphism 
$$p: \U(\mf +\af) \to \U(\af_Z)$$ with
kernel $(\mf+(\af\cap\hf))\,\U(\mf+\af)$.  
We compose with $\mu_1$ and obtain a linear map
$$\mu_2: \U(\gf)\to \U(\af_Z)\, .$$ 
The restriction 
to $\Zc(\gf)$ is an algebra homomorphism and 
will be denoted by $\mu$. It follows from the above that
\begin{equation}\label{zdeco}
z-\mu(z)\in \nf\, \U(\gf)_{d-1} + (\mf+(\af\cap\hf))\,\U(\mf +\af)_{d-1}
\end{equation} 
for $z\in\Zc(\gf)$.
Furthermore by applying $p$ to (\ref{vsum}) we see that
$\U(\af_Z)$ is finitely generated over $\mu(\Zc(\gf))$
with generators $p(v_j)\in \U(\af_Z)$. Let $\Yc$ denote the finite set
$$\Yc=\{p(v_1),\dots,p(v_r)\}\subset\U(\af_Z).$$

\par 
\begin{lemma} \label{app4}For all $n\in \Z_{\ge 0}$ there exists $t=t_n>0$ such 
that for all $u\in \U(\gf)$ with $\deg(u)\le n$ 
there exist 
$$U_j \in \U(\kf), 
V_j \in \Yc\subset\U(\af_Z),  W_j \in 
\U(\hf), z_j\in\Zc(\gf)$$  
with 
$$\deg U_j +\deg V_j + \deg W_j+\deg z_j \leq n,$$
and holomorphic functions $f_j \in \mathcal{O}(D_{e^{-t}})$ such that 
\begin{equation}\label{sum with Z}  
u=\sum_{j=1}^p f_j(\iota(a)) 
(\Ad(a^{-1})U_j)V_{j} W_j z_j\end{equation} 
for all $a\in A_{t}$.  
\end{lemma}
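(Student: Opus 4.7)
The proof will proceed by induction on $n$. The base case $n=0$ is immediate: $u$ is scalar, and we take $V_j=1\in \Yc$ (we arrange $1\in\Yc$ by including $v_1=1$ among the generators of $\Zc(\mf+\af)$ in (\ref{vsum}), so $p(v_1)=1$) and $z_j=1$. For the inductive step, I first invoke Lemma~\ref{app3} to write $u$ as a sum of terms of the form $f(\iota(a))(\Ad(a^{-1}) U) V W$ with $V\in \U(\af_Z)$ and $\deg U+\deg V+\deg W\le n$. It suffices to rewrite each such term in the form asserted in Lemma~\ref{app4}. To this end, apply the module-theoretic decomposition (obtained by applying $p$ to (\ref{vsum})): $V=\sum_i \mu(z_i) Y_i$ with $Y_i\in\Yc$, $z_i\in\Zc(\gf)$, and $\deg z_i+\deg Y_i\le \deg V$.

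The key identity $\mu(z_i)=z_i-(z_i-\mu(z_i))$, combined with the centrality of $z_i\in \Zc(\gf)$ (so that $z_i$ commutes past all other factors to the right), yields
\[
(\Ad(a^{-1}) U)\mu(z_i) Y_i W = (\Ad(a^{-1}) U) Y_i W z_i - (\Ad(a^{-1}) U) \epsilon_i Y_i W,
\]
where by (\ref{zdeco}) the error satisfies $\epsilon_i=z_i-\mu(z_i)\in \nf\,\U(\gf)_{d-1}+(\mf+\af\cap\hf)\,\U(\mf+\af)_{d-1}$, $d=\deg z_i$. The first summand is already in the desired form, with $V_j=Y_i\in\Yc$, $z_j=z_i$, and total degree $\le \deg U+\deg Y_i+\deg W+\deg z_i\le n$. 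The error term is then processed by case analysis on the leftmost factor of each summand of $\epsilon_i$: (i) $\mf$-factors, being in $\kf$ and centralized by $A$, are absorbed into the $\U(\kf)$-factor via $(\Ad(a^{-1})U)m=\Ad(a^{-1})(Um)$; (ii) $\af\cap\hf$-factors, being in $\hf$ and commuting with $\mf+\af$ (hence with $\U(\af_Z)$), are moved to the right into the $\U(\hf)$-factor; (iii) $\nf_L$-factors, contained in $\lf_n\subset\hf$ and commuting with $\af_Z\subset\zf(\lf)$, are likewise absorbed into the $\U(\hf)$-factor up to commutators of lower degree; (iv) $\uf$-factors are expanded via Lemma~\ref{app2} with coefficients in $\mathcal{O}_0(D_{e^{-t}})$. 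After these manipulations, each resulting term can be re-expressed through Lemma~\ref{app3} to isolate a middle $\U(\af_Z)$-factor of strictly smaller degree, to which the induction hypothesis applies.

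The main obstacle will be the detailed degree-bookkeeping needed to close the induction while preserving the bound $\deg U_j+\deg V_j+\deg W_j+\deg z_j\le n$. The most delicate point is case (iv), where Lemma~\ref{app2} introduces new $\U(\af_Z)$-factors $Y_k$ (of degree one) that combine multiplicatively with $Y_i$ to form middle factors $Y_k Y_i$ of degree $\le \deg Y_i+1$. The strict decrease $\deg(Y_k Y_i)\le \deg V-1$ that closes the induction relies on the observation that any non-scalar $z_i\in \Zc(\gf)$ with a nontrivial $\nf$-contribution to $\epsilon_i$ satisfies $\deg z_i\ge 2$: any degree-one central element lies in $\zf(\gf)\subset \mf+\af$ and yields $\epsilon_i\in \mf+\af\cap\hf$, thereby bypassing case (iv) entirely. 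Consequently $\deg Y_i\le \deg V-2$ whenever case (iv) is invoked, providing the strict reduction needed.
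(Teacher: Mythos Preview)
Your induction does not close in case (iv). Recall from (\ref{zdeco}) that the $\nf$-part of $\epsilon_i = z_i - \mu(z_i)$ lies in $\nf\,\U(\gf)_{d-1}$ with $d = \deg z_i$; a typical summand is $R\,u_{mi}$ with $R \in \uf$ and $u_{mi}$ a \emph{general} element of $\U(\gf)_{d-1}$. After expanding $R$ via Lemma~\ref{app2}, the $\af_Z$-contribution produces terms $g_k(\iota(a))\,(\Ad(a^{-1})U)\,Y_k\,u_{mi}\,Y_i\,W$ with $Y_k \in \af_Z$. Your assertion that the resulting ``middle factor'' is $Y_k Y_i$ of degree $\le \deg Y_i + 1$ simply drops the factor $u_{mi}$ sitting between $Y_k$ and $Y_i$. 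The honest count gives $\deg(Y_k\,u_{mi}\,Y_i) \le 1 + (d-1) + \deg Y_i = d + \deg Y_i \le \deg V$, with no strict reduction; and since $u_{mi} \in \U(\gf)$ is not in $\U(\af_Z)$, re-expressing via Lemma~\ref{app3} can yield a middle $\U(\af_Z)$-part of degree up to $\deg V$ again. Your observation that $\deg z_i \ge 2$ whenever the $\uf$-part of $\epsilon_i$ is nontrivial is correct, but it only gives $\deg Y_i \le \deg V - 2$; it does not bound $\deg u_{mi}$ away from $d-1$, so the combined degree $d + \deg Y_i$ can still equal $\deg V$. In the critical case $\deg U = \deg W = 0$, $\deg V = n$, you therefore end up with terms of full degree $n$ to which neither the induction on $n$ nor a secondary induction on $\deg V$ applies.

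The paper's proof does not attempt strict degree reduction here. It first reduces (via Lemma~\ref{app3} and the induction hypothesis) to $u \in \U(\af_Z)_n$, runs the same expansion, and accepts that the $\af_Z$-contribution from Lemma~\ref{app2} leaves a residual of the form $g(\iota(a))\,w$ with $g \in \mathcal{O}_0(D_{e^{-s}})$ and $w \in \U(\gf)_n$ independent of $a$. Carrying this out for a basis $u_1,\dots,u_N$ of $\U(\gf)_n$ yields a linear system $(\1_{N\times N} + F(z))\cdot(u_1,\dots,u_N)^\top = (\text{terms already of the form }(\ref{sum with Z}))$ with $F$ holomorphic and $F(0)=0$; inverting $\1+F(z)$ for small $z$ (at the cost of shrinking the polydisc, i.e.\ increasing $t$ from $t_{n-1}$ to some larger $t_n$) then finishes the argument. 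This matrix-inversion step is the missing ingredient in your proof, and it is precisely why the lemma allows $t_n$ to grow with $n$.
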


\begin{proof} Note the dependence of $t$ on the degree of $u$, contrary to
what was the case in Lemma \ref{app3}. 
The proof will again be by induction on $n$, and the case $n=0$ is again clear. 
Furthermore, we see from Lemma \ref{app3} that it suffices to establish
the decomposition for $u\in\U(\af_Z)_n$, since if $\deg(U_j)>0$ or
$\deg(W_l)>0$ in (\ref{deco}) then $\deg(V_k)<n$ and the induction hypothesis applies.

Using (\ref{vsum}) 
we write $u=\sum \mu(z_{i}) Y_i$ with $z_{i}\in \Zc(\gf)$ and $Y_i\in\Yc$,
such that 
$\deg(z_{i}) + \deg(Y_i)\leq n$ for all $1\leq i \leq r$. 
With $\nf=\uf+\nf_L$ and $\nf_L\subset \hf$ 
we obtain from  (\ref{zdeco}) for each $i$,
$$\mu(z_{i}) =z_{i} +  \sum_m \underbrace{ R_m}_{\in\uf} u_{mi}  + 
\sum_{m'} \underbrace{S_{m'}}_{\in\mf} {u_{m'i}'} +  
\sum_{m''} \underbrace{T_{m''}}_{\in \hf} 
u_{m''i}''$$
with $u_{mi},u_{m'i}',u_{m''i}''\in\U(\gf)$  all
of degree $< \deg z_{i}$.  
Hence
$$u=\sum_i z_{i}Y_i +  \sum_{i,m}  R_m u_{mi}Y_i  + 
\sum_{i,m'} S_{m'} u_{m'i}'Y_i +  
\sum_{i,m''} T_{m''} u_{m''i}''Y_i.$$
The terms in the first sum already have the desired form. 
The terms with $S_{m'}$ are dealt with directly by the induction hypothesis,
and the terms with $T_{m''}$ are dealt with similarly
after commuting $T_{m''}$ and $u_{m''i}''Y_i$. 
Hence only the second sum remains, which is a sum of terms
$Rv$ with $R\in\uf$ and $v\in\U(\gf)_{n-1}$.

We use Lemma \ref{app2} to decompose $R$ as a linear combination of
terms $\Ad(a^{-1})X$, with $X\in\kf$, and basis vectors of $\af_z$ or $\hf$, 
and with coefficients from $\mathcal{O}_0(D_{e^{-s}})$ (for some fixed $s>0$). 
By the induction hypothesis applied to $v$, 
the terms $(\Ad(a^{-1})X)v$ again have the desired form (\ref{sum with Z}). 
Here $a\in A_{t_{n-1}}$.
We thus reach the conclusion that for each such $a$, our element
$u\in\U(\af_Z)_n$ is of form (\ref{sum with Z})
plus elements of the form $g(\iota(a)) w$ 
with $g\in \mathcal{O}_0(D_{e^{-s}})$ and $w\in\U(\gf)_n$ independent of $a$. 
The same conclusion then applies to every element $u\in\U(\gf)_n$,
as observed in the beginning of the proof.

\par We do the above for a basis $u_1, \ldots , u_N$ of $\U(\gf)_n$ and 
finally use the fact that an $N\times N$-matrix 
of the form $\1_{N\times N} + F(z)$ with $F$ holomorphic and $F(0)=0$ is invertible for 
sufficiently small $z$.
\end{proof} 
  
We can now give the proof of (\ref{power}). 
For $v\in V[\tau]$ we consider the matrix coefficient
$f(gH)=m_{v,\eta}(gH)=\eta(\pi(g)^{-1}v)$,
which we recall is a joint eigenfunction for $\Zc(\gf)$. As explained
in Remark \ref{rmk2}, $f$ is a sum of functions
of the form $\la F(gH),w\ra$ where $w\in W_\tau$ and where
$F\in C^\infty(Z, W_{\tau^*})$ is $\tau^*$-spherical, that is,
$F(kgH)=\tau^*(k)F(gH)$ for all $k\in K$, $g\in G$.
Note that this implies
$F(aH)\in W_{\tau^*}^{M\cap H}$ for $a\in A$.
Furthermore, the action of
elements from $\U(\gf)^{M\cap H}$ is computed by taking radial parts,
and $F$ is a joint eigenfunction for $\Zc(\gf)$.

Let $\Pi\subset\af^*$ denote the set of simple roots. For simplicity
we assume $\gf=[\gf,\gf]$ in the remainder of 
this section, so that
$\Pi$ is a basis for $\af^*$.
Extension to the reductive case is
elementary. 
Let $d=|\Pi|=\dim\af$.

\newcommand{\Dc}{\mathcal{D}}
\begin{proposition}
Let $F\in C^\infty(Z, W_{\tau^*})$ be a $\tau^*$-spherical
joint eigenfunction for $\Zc(\gf)$.
There exist a neighborhood $\Dc$ of $0$ in $\C^d$, a number $M\in\N$, a finite set
$S\subset \C^d$,
and for each $s\in S$ and each multiindex $0\le |m|\le M$ 
a holomorphic $W_{\tau^*}^{M\cap H}$-valued function
$h_{s,m}$ on $\Dc$ such that
\begin{equation}
F(aH)=\sum_{s\in S}\sum_{0\le |m|\le M} z^s(\log z)^m h_{s,m}(z),
\end{equation}
for all $a\in A$ such that 
 $z=(a^{\alpha})_{\alpha\in\Pi}\in \Dc$.
\end{proposition}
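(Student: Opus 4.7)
The plan is to convert the $\Zc(\gf)$-eigenequations satisfied by $F$ into a first-order holonomic Pfaffian system in $d=\dim\af$ variables with a regular singularity at $z=0$, and then to invoke the classical expansion theorem for such systems as developed in \cite{CM} and \cite{vdB}.

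To set up the system I will combine three facts: right-differentiation of $F$ by $\hf$ vanishes (since $F$ is right $H$-invariant); $(\Ad(a^{-1})U)\cdot F(a)=\tau^*(U)F(a)$ for $U\in\U(\kf)$ (by $\tau^*$-sphericality); and $z\cdot F=\chi(z)F$ for $z\in\Zc(\gf)$, where $\chi=\chi_V$ is the infinitesimal character. Substituting the decomposition from Lemma \ref{app4} of an arbitrary $u\in\U(\gf)$ into $uF$ and evaluating at $a\in A_t$ will give
\begin{equation*}
(uF)(a)=\sum_{j:\,\deg W_j=0}f_j(\iota(a))\,\chi(z_j)\,\tau^*(U_j)(V_jF)(a),
\end{equation*}
where the $V_j$ range over the finite set $\Yc\subset\U(\af_Z)$ of Lemma \ref{app4}, which I may enlarge to contain $1$. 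Setting $\Phi(a)=(VF(a))_{V\in\Yc}$ makes $F$ itself one component of $\Phi$. Next I pick a basis $H_1,\ldots,H_d$ of $\af$ dual to the simple roots $\Pi=\{\alpha_1,\ldots,\alpha_d\}$; in the coordinates $z_i=a^{\alpha_i}$ the derivation $H_i$ becomes the Euler operator $z_i\partial_{z_i}$. Applying the displayed identity to $u=H_iV_0$ for each $V_0\in\Yc$ and each $i$ will then produce a first-order Pfaffian system
\begin{equation*}
z_i\partial_{z_i}\Phi=M_i(z)\Phi\qquad(i=1,\ldots,d),
\end{equation*}
with each $M_i$ matrix-valued and holomorphic in a neighborhood of $0\in\C^d$. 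The coefficients $f_j$ from Lemma \ref{app4} were holomorphic in $\iota(a)=(a^{\alpha_j})_j$ over the roots of $\uf$; since every such root is a non-negative integer combination of simple roots, they pull back to holomorphic functions in $z$.

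The resulting system is manifestly regular singular at $z=0$ (its principal parts are the Euler operators $z_i\partial_{z_i}$ and the perturbations $M_i(z)$ are holomorphic), so the classical theory of regular-singular Pfaffian systems in several variables, in the form worked out in \cite{CM} and \cite{vdB}, will produce a neighborhood $\Dc$ of $0$, a finite set $S\subset\C^d$ of joint exponents (read off from generalized eigenvalues of $M_1(0),\ldots,M_d(0)$), an integer $M\in\N$, and holomorphic $(W_{\tau^*}^{M\cap H})^{|\Yc|}$-valued functions $\phi_{s,m}$ on $\Dc$ such that
\begin{equation*}
\Phi(a)=\sum_{s\in S,\,|m|\le M}z^s(\log z)^m\phi_{s,m}(z).
\end{equation*}
Reading off the component indexed by $1\in\Yc$ will yield the asserted expansion of $F$. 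The hard part will be to apply the classical theorem correctly: integrability of the Pfaffian system is automatic because $[H_i,H_j]=0$ in $\af$ and $\Phi$ is a genuine solution, and regular singularity is visible from the Euler form, but the theorem on convergent asymptotic expansions must be cited in the version valid in several variables with vector-valued holomorphic coefficients, which is exactly what \cite{CM} and \cite{vdB} are designed to supply.
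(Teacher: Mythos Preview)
Your proposal is correct and follows essentially the same strategy as the paper: use Lemma~\ref{app4} to build a closed first-order system in the Euler operators $z_i\partial_{z_i}$ with holomorphic coefficients near $z=0$, then invoke the standard expansion theorem for regular-singular systems. The only cosmetic differences are that the paper indexes the unknown vector by all monomials $X_1^{k_1}\cdots X_d^{k_d}\in\U(\af)$ of degree at most $D=\max_{V\in\Yc}\deg V$ rather than by $\Yc$ itself, and cites Knapp's Theorem~B.16 for the expansion instead of \cite{CM}, \cite{vdB}.
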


\begin{proof}
Let 
$X_1,\dots,X_d$ be the basis 
for $\af$ which is dual to $\Pi$,
and
let $D$ be the maximal degree of the 
the finite set of operators from $\Yc\subset\U(\af_Z)$.
For each multi-index $k=(k_1,\dots,k_d)$ with $|k|\le D$
we define
$$F_k(gH)=X_1^{k_1}\cdots X_d^{k_d}F(gH),\quad gH\in Z,$$
a $W_{\tau^*}$-valued function. 
For each $j=1,\dots,d$ and each multiindex $l$ with $|l|\le D$, the function $X_j F_l$ can then be determined
by giving the monomial $X_jX_1^{k_1}\cdots X_d^{k_d}\in\U(\af)$ an 
expression (\ref{sum with Z}) according to Lemma \ref{app4} with $t=t_{D+1}$,
and applying the theory of radial parts, as explained in
Remark \ref{rmk2}.
It follows that on $A_t$, the function $X_jF_l$ is 
a combination of the $F_k$'s, with $|k|\le D$ and with
$\End(W_{\tau^*}^{M\cap H})$-valued coefficients depending on $a\in A_t$. 

We thus see that the vector-valued function $\mathbf F(a)=(F_k(aH))_k$
satisfies a first order ordinary differential system 
$$X_j\mathbf F (a)= M(a) \mathbf F(a)$$
on $A_t$ with an operator $M(a)$. Furthermore, it follows from
(\ref{sum with Z}) that $M(a)$ extends to a holomorphic function of 
 $z=(a^{\alpha_j})_{j=1,\dots,n}\in\C^n$
in a neighborhood of $0$. In particular, $M(a)$ is then a 
holomorphic function of the smaller tuple 
 $(a^{\alpha})_{\alpha\in\Pi}\in\C^d$
in a neighborhood of $0$, since every positive root is a sum of simple roots. 
With coordinates on $A$ given by 
 $(a^{\alpha})_{\alpha\in\Pi}\in\C^d$
we thus obtain a system of equations,
which has a simple singularity at $0$ according to
\cite{Kn}, p 702, Example 2. The proposition now follows from
 \cite{Kn}, Thm.~B.16.
\end{proof}

By taking inner products with elements from $W_\tau$ we obtain
a similar expansion of the matrix coefficients $m_{v,\eta}(a)$ for $a\in A_t$,
for all $v\in V[\tau]$.

\begin{cor}
Let $\eta\in (V^{-\infty})^H$ and $v\in V[\tau]$.
There exist a neighborhood $\Dc$ of $0$ in $\C^d$, a number $M\in\N$, a finite set
$S\subset \C^d$,
and for each $s\in S$ and each multiindex $0\le |m|\le M$ 
a holomorphic function
$f_{s,m}$ on $\Dc$ such that
\begin{equation}
m_{v,\eta}(a)=\sum_{s\in S}\sum_{0\le |m|\le M} z^s(\log z)^m f_{s,m}(z),
\end{equation}
for all $a\in A$ such that 
 $z=(a^{\alpha})_{\alpha\in\Pi}\in \Dc$.
\end{cor}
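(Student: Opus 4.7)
My plan is to derive the corollary directly from the preceding proposition via the Peter--Weyl decomposition recalled in Remark~\ref{rmk2}. First I note that since $v\in V[\tau]$, we have $\pi(k^{-1})v\in V[\tau]$ for every $k\in K$, so $f := m_{v,\eta}$ has left $K$-type $\tau$, i.e.\ $f\in C^\infty(Z)_\tau$. By Remark~\ref{rmk2} I can then write
$$f(gH)=\sum_{j}\la F_j(gH),\, w_j\ra$$
as a finite sum in which $w_j\in W_\tau$ and each $F_j\in C^\infty(Z, W_{\tau^*})$ is a $\tau^*$-spherical joint eigenfunction for $\Zc(\gf)$ with infinitesimal character $\chi_V$.

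Next I would apply the preceding proposition to each $F_j$ separately. This yields a neighborhood $\Dc_j$ of $0$ in $\C^d$, a finite set $S_j\subset\C^d$, an integer $M_j\in\N$, and holomorphic $W^{M\cap H}_{\tau^*}$-valued functions $h^{(j)}_{s,m}$ on $\Dc_j$ realizing the expansion of $F_j(aH)$ in the coordinates $z=(a^\alpha)_{\alpha\in\Pi}$. Pairing with $w_j$ produces scalar holomorphic functions $f^{(j)}_{s,m}(z) := \la h^{(j)}_{s,m}(z),\, w_j\ra$, and since $z^s$ and $(\log z)^m$ are scalars the expansion of $F_j$ passes termwise to one for $\la F_j(aH),\, w_j\ra$. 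Setting $\Dc:=\bigcap_j\Dc_j$, $S:=\bigcup_j S_j$, $M:=\max_j M_j$, and $f_{s,m}(z):=\sum_j f^{(j)}_{s,m}(z)$ (with the convention that terms absent for a given $j$ are taken to be zero) then delivers the asserted expansion for $m_{v,\eta}(a)$ on the subset of $A$ where $z\in\Dc$.

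The substantive analytic content---identifying the first-order system satisfied by the vector of higher derivatives of $F$ as one with a regular singularity at $0$, and extracting the $z^s(\log z)^m$ expansion via \cite{Kn}, Thm.~B.16---has already been carried out in the proof of the preceding proposition. For the corollary itself no genuine obstacle remains: pairing with a fixed vector $w_j$ preserves holomorphicity and commutes with the scalar factors $z^s(\log z)^m$, and the uniformization over the finite index set $\{j\}$ is purely formal. The hard step is simply not in this corollary; it lives entirely in the proposition and in the regular-singularity theory borrowed from \cite{Kn}.
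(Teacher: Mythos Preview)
Your proposal is correct and follows exactly the paper's approach: the paper derives the corollary in a single sentence (``By taking inner products with elements from $W_\tau$ we obtain a similar expansion of the matrix coefficients'') by pairing the expansion from the preceding proposition against the vectors $w_j$ in the decomposition $f=\sum_j\la F_j,w_j\ra$ of Remark~\ref{rmk2}, and your write-up is a faithful elaboration of that step.
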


Let now $X\in\af$ be such that $\alpha(X)\in\Z_-$ 
for each $\alpha\in\Pi$ and consider $m_{v,\eta}(a_s)$ for $s\to\infty$,
where $a_s=\exp(sX)$. Note that  $(a_s^{\alpha})_{\alpha\in\Pi}\in \Dc$
for $s\gg 0$ and that 
 $a_s^{\alpha}$ is a positive integral
power of $e^{-s}$ for each $\alpha\in\Pi$.
Hence $f_ {s,m}( (a_s^{\alpha})_{\alpha\in\Pi})$ extends to a holomorphic
function of $\zeta=e^{-s}$.
This completes the proof of 
(\ref{power}) and hence of
Lemma \ref{lem1}.

\end{document}